\numberwithin{equation}{section}%
\newtheorem{thm}{Theorem}[section]
\newtheorem{cor}{Corollary}[section]
\newtheorem{lemma}{Lemma}[section]
\newtheorem*{proof}{Proof}
\newcommand*\lap{\mathop{}\!\mathbin\bigtriangleup}
\def\Ka{\mathrm{K\ddot{a}hler}}
\def\i{\sqrt{-1}}
\def\dd{\partial\bar{\partial}}
\def\tr{\mathrm{tr}}
\def\Am{\mathrm{Amp\Grave{e}re}}
\def\Linfty{L^{\infty}}
\begin{document}

\begin{center}
{\Large The $\Linfty$ estimate for parabolic complex Monge-Amp\`ere equations}

\medskip
\centerline{Qizhi Zhao}

\begin{abstract}
{\footnotesize } Following the recent development in \cite{CC3,GPT}, we derived the $\Linfty$ estimate for $\Ka$-Ricci flows under a weaker assumption. The technique also extends to more general cases coming from different geometric backgrounds.
\end{abstract}

\end{center}

\baselineskip=15pt
\setcounter{equation}{0}
\setcounter{footnote}{0}

\section{Introduction}

In this paper, we will derive the $\Linfty$ estimate for the $\Ka$-Ricci flow,
\begin{equation}\label{MainFlow}
    \left\{
    \begin{aligned}
        &\partial_t\varphi=\log\left(\frac{\omega_{\varphi}{}^n}{e^{nF}\omega_0{}^n}  \right)\\
        &\varphi(\cdot,0)=\varphi_0(\cdot),
    \end{aligned}
    \right.
\end{equation}
under the assumption that the $p$-entropy $\mathrm{Ent}_p(F)=\int_M |F|^pe^{nF}\omega_{0}{}^n$ is bounded and moreover $\int_M F\omega_0{}^n$ has a lower bound. Here is our main theorem,

\begin{thm}\label{MainThm}
    Let us consider the flow equation (\ref{MainFlow}) on $M\times[0,T)$, where $M$ is an $n$-dimensional compact $\Ka$ manifold. Let $F$ be a space function, i.e. $F:M\rightarrow \mathbb{R}$. Assume for some $p>n+1$, the $p$-entropy $\mathrm{Ent}_p(F)=\int_{M}|F|^pe^{nF}\omega_{0}{}^n$ is bounded and $\int_M nF\omega_0{}^n \geq - K$. Moreover, suppose $\varphi$ be a $C^2$ solution and let $\tilde{\varphi}=\varphi-\fint\varphi\omega_0{}^n$ be a normalization which has the zero integral. Then we have the $\Linfty$ estimate, $$\|\Tilde{\varphi}\|_{\Linfty\big(M\times[0,T)\big)}\leq C,$$ where $C$ depends on $n,\  \omega_0,\ \varphi_0,\ p,\ K$ and $\mathrm{Ent}_p(F)$.
\end{thm}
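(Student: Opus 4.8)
The plan is to reduce the parabolic $L^\infty$ estimate to a family of elliptic complex Monge–Ampère inequalities to which the Guo–Phong–Tong / Chen–Cheng type $L^\infty$ machinery applies. First I would rewrite the flow: if $\varphi$ solves \eqref{MainFlow}, then at each fixed time $t$ we have $\omega_\varphi^n = e^{\partial_t\varphi + nF}\,\omega_0^n$, so $\varphi(\cdot,t)$ satisfies a Monge–Ampère equation with right-hand side $e^{nF}\cdot e^{\partial_t\varphi}$. The extra factor $e^{\partial_t\varphi}$ is the essential new feature compared with the elliptic case, and controlling its contribution to the relevant integral norms is where the hypotheses $p>n+1$ and $\int_M nF\,\omega_0^n\ge -K$ enter.

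Next I would set up the auxiliary function method. Let $\tilde\varphi$ be the normalized potential and, following the De Giorgi iteration / Alexandrov–Bakelman–Pucci comparison scheme used in \cite{GPT}, introduce for a parameter $s\ge 0$ the sublevel sets $\Omega_s=\{\tilde\varphi < -s - \inf\tilde\varphi\}$ (or work directly with the distribution function of $-\tilde\varphi$), and the associated quantity $\phi_s$ solving an auxiliary Monge–Ampère equation $(\omega_0+\i\dd\phi_s)^n = \tau_s(-\tilde\varphi - s + \text{const})\,\omega_\varphi^n$ on $\Omega_s$, with $\tau_s$ chosen so the total masses match. Comparing $\phi_s$ and $\tilde\varphi$ via the maximum principle yields a differential inequality for the function $s\mapsto \int_{\Omega_s}(-\tilde\varphi - s)\,\omega_0^n$, or equivalently for the $L^1$-norms of $(\tilde\varphi+s)_-$. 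The key analytic input is a bound of the form $\int_{\Omega_s} e^{nF+\partial_t\varphi}\,\omega_0^n \le C\,|\Omega_s|^{1-\frac{1}{p/n}}$ (a "small-mass-of-the-density" estimate), which follows from Hölder's inequality and the $p$-entropy bound once one controls $\int e^{p\partial_t\varphi/\cdots}$ — and here one uses that along the flow $\partial_t\varphi = \log(\omega_\varphi^n/(e^{nF}\omega_0^n))$ has controlled integral: $\int_M e^{\partial_t\varphi}e^{nF}\omega_0^n = \int_M \omega_\varphi^n = \int_M\omega_0^n$ is fixed, which bounds $\partial_t\varphi$ from above in an integral sense, while a lower bound on $\int\partial_t\varphi$ follows from Jensen's inequality together with $\int nF\,\omega_0^n\ge -K$.

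With the density estimate in hand, the iteration proceeds exactly as in the elliptic case: one shows the distribution function of $-\tilde\varphi$ satisfies a nonlinear recursion forcing it to vanish beyond some finite level, giving $-\inf_M\tilde\varphi(\cdot,t)\le C$ uniformly in $t$; the upper bound $\sup_M\tilde\varphi(\cdot,t)\le C$ is obtained symmetrically (or more cheaply, since $\int_M\tilde\varphi\,\omega_0^n=0$ forces $\sup\tilde\varphi\ge 0$ and an $L^1$ bound on $\tilde\varphi$ plus a Green's-function/Moser argument on the fixed background $\omega_0$ controls $\sup\tilde\varphi$ in terms of $-\inf\tilde\varphi$). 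Uniformity in $T$ is automatic because every constant produced depends only on $n,\omega_0,\varphi_0,p,K$ and $\mathrm{Ent}_p(F)$, never on $t$ or $T$.

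\medskip

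\noindent\textbf{Main obstacle.} The crux is the time-derivative term $e^{\partial_t\varphi}$ in the effective Monge–Ampère density: unlike the elliptic problem, the right-hand side is not a fixed $L^{p/n}$ function but depends on the solution. The hard part will be to obtain the uniform-in-$t$ bound $\|e^{\partial_t\varphi}\|_{L^{q}(\omega_0^n)}\le C$ for some $q>1$ large enough that $e^{nF}e^{\partial_t\varphi}\in L^{p'/n}$ with $p'>n+1$ still — i.e. trading part of the $p$-entropy budget of $F$ against the integrability of $\partial_t\varphi$. This is exactly where the strict inequality $p>n+1$ (rather than $p=n+1$) is consumed, and where the lower bound hypothesis $\int_M nF\,\omega_0^n\ge -K$ is needed to prevent $\partial_t\varphi$ from drifting to $-\infty$ on large sets. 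Once this integral bound on $\partial_t\varphi$ is established, the rest is a routine adaptation of \cite{GPT}.
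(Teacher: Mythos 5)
Your plan takes a genuinely different route from the paper, and at the step you yourself flag as the ``main obstacle'' it has a gap that you do not close and that I do not believe can be closed from the stated hypotheses.

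You propose to freeze time, view $\omega_\varphi^n = e^{\dot\varphi + nF}\omega_0^n$ as a family of elliptic Monge--Amp\`ere equations, and run the Guo--Phong--Tong elliptic scheme after establishing a uniform-in-$t$ bound $\|e^{\dot\varphi}\|_{L^q(\omega_0^n)}\le C$ for some $q>1$. You explicitly defer that bound (``the hard part will be to obtain\dots''), so the proposal is incomplete as stated. More importantly, the hypotheses do not obviously support such a bound. The natural parabolic mechanism for controlling $\dot\varphi$ is the heat equation $\partial_t\dot\varphi = \lap_{\omega_\varphi}\dot\varphi$, which does give that $t\mapsto\int_M e^{q\dot\varphi}\omega_\varphi^n$ is nonincreasing; but the initial value is $\int_M \bigl(\omega_{\varphi_0}^n/\omega_0^n\bigr)^{q+1} e^{-qnF}\omega_0^n$, which requires integrability of $e^{-qnF}$. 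Neither $\mathrm{Ent}_p(F)<\infty$ (a control on $e^{nF}$ from above, in an entropy sense) nor $\int_M nF\,\omega_0^n\ge -K$ (an $L^1$-type lower bound on $F$) gives you $e^{-qnF}\in L^1$ for any $q>0$. So the density $e^{\dot\varphi+nF}$ cannot be placed in a fixed $L^{p'}$ or entropy class uniformly in $t$ by the means you sketch. You also have a sign error: Jensen gives an \emph{upper} bound $\fint_M\dot\varphi\,\omega_0^n\le -\fint_M nF\,\omega_0^n\le K$ (this is exactly Lemma~\ref{gap}(1)), not a lower bound as your text asserts, and the role of $p>n+1$ is to make the De Giorgi iteration exponent $\delta_0>0$ after the extra time integration, not to ``trade entropy budget'' against integrability of $\dot\varphi$.

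The paper avoids the need for any $L^q$ control of $e^{\dot\varphi}$ entirely by working parabolically. It uses an auxiliary inverse Monge--Amp\`ere flow restricted to unit time slices $[t_0,t_0+1]$ (to keep constants $T$-independent via Corollary~\ref{alpha}), applies the parabolic linearization $L = -\partial_t + \lap_{\omega_\varphi}$ to the comparison function $H=-\epsilon(-\psi+\Lambda)^\beta - \tilde\varphi - s$, and after the geometric--arithmetic inequality chain arrives at a term of the form $(n+1)\beta\epsilon(-\psi+\Lambda)^{\beta-1}f^{1/(n+1)} + (\dot\varphi - C_3 - n)\exp\!\bigl(\tfrac{1}{n+1}\dot\varphi\bigr)\le 0$. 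The factor $(\dot\varphi-C_3-n)\exp\!\bigl(\tfrac{1}{n+1}\dot\varphi\bigr)$ is then absorbed using the elementary fact that $h(x)=(x-n-C_3)e^{x/(n+1)}$ is bounded below by a universal constant $-C_4$. This is the decisive trick: it neutralizes the $\dot\varphi$-dependence pointwise at the maximum, requiring no integrability of $e^{\dot\varphi}$ at all. That idea, together with the cut-off to unit time intervals and the parabolic ABP estimate for the energy bound in Section~\ref{seceng}, is the core of the paper and is absent from your proposal.
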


In \cite{Yau1}, Yau applied the method of Moser iteration to derive the $L^{\infty}$ estimate for Monge-$\Am$ equation when $\|F\|_{L^p}$ is bounded for some $p>n$. Later, Kołodziej \cite{Kolo} gave another proof by using the pluripotential theory under a weaker assumption that $\|F\|_{L^p}$ is bounded for some $p>1$. More recently, Guo-Phong-Tong \cite{GPT} recovered Kołodziej's estimate by a PDE method which was partly motivated by Chen-Cheng's breakthrough on the cscK metric \cite{CC1}.

The $\Ka$-Ricci flow was firstly studied by Cao. In \cite{Cao1}, he gave an alternative proof of Calabi's conjecture for $c_1(M)=0$ and $c_1(M)<0$ which investigated the estimates for the $\Ka$-Ricci flow instead of Monge-$\Am$ equations. There are abundant results on $\Ka$-Ricci flows, see \cite{EGZ1,EGZ2, GLZ, JianShi23} for example. Our result requires a weaker regularity on the right hand side than Cao's $\Linfty$ estimate and can be viewed as a parabolic analogue of \cite{GPT, WWZ}.

There are some technical improvements in our paper compared with some previous results in \cite{CC3,GP2,GP1,GPT}. The difficulty for the flow problem is that we want to derive an $\Linfty$ estimate independent on $T$. But the original auxiliary equation may not serve as a good choice. Our approach is to consider a local version of auxiliary flows instead, see section \ref{sectaux}. Compared with elliptic version of $L^{\infty}$ estimate, our theorem requires an extra integral condition. Rewrite the equation (\ref{MainFlow}) by $\omega_{\varphi}^n = e^{\dot{\varphi} + nF} \omega_0^n$, we could see that the $L^{\infty}$ estimate of $\varphi$ comes from some $p$-Entropy bounds on $e^{\dot{\varphi} + nF}$. Roughly speaking, we need not only the $p$-Entropy bound controlls on $e^{nF}$ but also some upper bounds on $\dot{\varphi}$. In $\Ka$-Ricci flow, Cao proved the supremum of $\dot{\varphi}$ can be controlled by the infimum of $e^{nF}$ when $F$ is smooth. Indeed such estimates can be generalized to general parabolic Monge-Amp\`ere flows. However, in our theorem, we can improve the point-wise condition by some integral condition on $F$.


There are two directions to generalize our main theorem \ref{MainThm}. As in \cite{CC3}, we can replace the Monge-$\Am$ operator on the right hand side by a more general nonlinear operator $\mathcal{F}$. Let $h_{\varphi}$ be an endomorphsim defined locally by $(h_{\varphi})^j_k=g^{j\bar{m}}(\omega_{\varphi})_{\bar{m}k}$, where $g^{j\bar{m}}$ is a local coordinate of the initial metric $\omega_{0}=\i g_{j\bar{m}}\mathrm{d}z^j\wedge\mathrm{d}\bar{z}^{m}$. Let $\lambda[h_{\varphi}]$ be the vector of eigenvalues of $h_{\varphi}$, and consider the nonlinear operator $\mathcal{F}:\Gamma\subset\mathbb{R}^n\rightarrow\mathbb{R}_{+}$ with the following four conditions,

\begin{enumerate}
    \item The domain $\Gamma$ is a symmetric cone with $\Gamma_n\subset\Gamma\subset\Gamma_1$, where $\Gamma_k$ is defined to be the cone of vectors $\lambda$ with $\sigma_j(\lambda)>0$ for $1\leq j\leq k$, where $\sigma_j$ is the $j$-th symmetric polynomial in $\lambda$;
    \item $\mathcal{F}(\lambda)$ is symmetric in $\lambda\in\Gamma$ and it is of homogeneous degree $r$;
    \item $\frac{\partial \mathcal{F}}{\partial \lambda_j}>0$ for each $j=1,\dots,n$ and $\lambda\in\Gamma$;
    \item There is a $\gamma>0$ such that
    \begin{equation}
        \prod_{j=1}^n\frac{\partial \mathcal{F}(\lambda)}{\partial\lambda_j}\geq\gamma\mathcal{F}^{n(1-\frac{1}{r})}, \forall\lambda\in\Gamma.
    \end{equation}
\end{enumerate}

The above requirements come from \cite{GPT}, and there is a slight modification on the last condition since the homogeneous degree of $\mathcal{F}$ is $r$ under our assumption. The complex Hessian operators and $p$-Monge-$\Am$ operators are examples. More examples can be found in \cite{HL}. Here is our first generalization,

\begin{thm}\label{genmainthm}
    Let $\varphi$ be a $C^2$ solution of the following flow 
    \begin{equation}\label{genflow}
        \left\{
        \begin{aligned}          
            &\partial_t\varphi=\log\left(\frac{\mathcal{F}(\lambda[h_{\varphi}])}{e^{rF}} \right)\\
            &\varphi(\cdot,0)=\varphi_0
        \end{aligned}
        \right.
    \end{equation}
    on $M\times[0,T)$, where $M$ is an $n$-dimensional compact $\Ka$ manifold. Let $F$ be a space function, i.e. $F:M\rightarrow\mathrm{R}$. Assume for some $p>n+1$, the $p$-entropy $\mathrm{Ent}_p(F)=\int_{M}|F|^pe^{nF}\omega_{0}{}^n$ is bounded and $\int_M F\omega_0{}^n \geq - K$. Then we have the $L^{\infty}$ estimate on the normalization $\tilde{\varphi}$, $$\|\tilde{\varphi}\|_{\Linfty\big(M\times[0,T)\big)}\leq C,$$ where $C$ depends on $n,\  \omega_0,\ \varphi_0,\ p,\ K,\ \gamma$ and $\mathrm{Ent}_p(F)$.
\end{thm}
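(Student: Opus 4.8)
The strategy is to adapt the Guo–Phong–Tong auxiliary Monge–Ampère argument to the parabolic setting, using the key new ingredient that the integral condition $\int_M F\,\omega_0^n \geq -K$ replaces the pointwise lower bound on $e^{nF}$ that would follow in Cao's smooth setting. First I would rewrite the flow as $\omega_\varphi^n \geq c(n,\gamma)\,\mathcal{F}(\lambda[h_\varphi])^{n/r}\,\omega_0^n = c(n,\gamma)\,e^{n\dot\varphi + nF}\omega_0^n$ using condition (4) together with the AM–GM-type inequality relating $\prod \partial\mathcal{F}/\partial\lambda_j$ to $\sigma_n$; this reduces the Hessian-type equation to a Monge–Ampère differential inequality with right-hand side $e^{n\dot\varphi + nF}$. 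The point is that the $L^\infty$ estimate on $\tilde\varphi$ will follow once we have a suitable $p$-entropy bound on the density $e^{n\dot\varphi + nF}$, and for that we need an \emph{upper} bound on $\dot\varphi$ (the lower bound on $\dot\varphi$ is not needed since $e^{n\dot\varphi+nF} \geq 0$ suffices for one direction, and the measure side is controlled by total mass).

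The heart of the matter is controlling $\int_{M\times[0,T)}$-type quantities involving $e^{n\dot\varphi+nF}$ uniformly in $T$. Here I would use the maximum principle on $\partial_t\varphi$ itself: differentiating the flow (\ref{genflow}) in $t$ gives a parabolic equation for $\dot\varphi$, and a Cao-type computation shows $\sup_M \dot\varphi(\cdot,t)$ is non-increasing (or controlled), so $\dot\varphi \leq \sup_M \dot\varphi(\cdot,0) = \sup_M \log(\mathcal{F}(\lambda[h_{\varphi_0}])/e^{rF})$, which depends only on $\varphi_0, F, \omega_0$ — but wait, this still involves $F$ pointwise. To avoid the pointwise dependence on $F$, instead I would integrate: $\int_M e^{n\dot\varphi}e^{nF}\omega_0^n = \int_M \omega_\varphi^n = \int_M \omega_0^n$ (up to the $c(n,\gamma)$ factor and the inequality direction), so the total mass of the density $e^{n\dot\varphi+nF}$ is bounded a priori. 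Combining the mass bound with the convexity of $t\mapsto t^p$ and the entropy hypothesis $\mathrm{Ent}_p(F) = \int_M |F|^p e^{nF}\omega_0^n \leq C$, plus the lower bound $\int_M nF\,\omega_0^n \geq -K$ to control $\int_M \dot\varphi\, e^{n\dot\varphi+nF}\omega_0^n$ from below via Jensen, I expect to extract a bound of the form $\int_M |\,n\dot\varphi + nF\,|^p\, e^{n\dot\varphi+nF}\,\omega_0^n \leq C(n,\omega_0,\varphi_0,p,K,\mathrm{Ent}_p(F))$, i.e. a uniform $p$-entropy bound on the density.

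With this uniform entropy bound in hand, I would invoke the auxiliary construction of section \ref{sectaux}: for each fixed time $t$ (or on a suitable time interval) set up the local auxiliary Monge–Ampère flow whose solution $\psi$ dominates $\tilde\varphi$ via a comparison/maximum principle, exactly mirroring the elliptic argument of \cite{GPT} where one solves $\omega_0^n_{\psi} = e^{\Lambda \psi}(\text{normalized density})\,\omega_0^n$ and uses the stability estimate. The parabolic twist — and the technical crux — is making all constants in this auxiliary step independent of $T$; this is precisely where the "local version of auxiliary flows" is needed, since the naive global auxiliary flow picks up $T$-dependence. I would run the De Giorgi–type iteration on the sublevel sets $\{\tilde\varphi < -s\}$ using the entropy bound to control the integrals $\int_{\{\tilde\varphi<-s\}} e^{n\dot\varphi+nF}\omega_0^n$ via a Young/Hölder inequality (this is where $p > n+1$, rather than $p>n$, is consumed — the extra $+1$ absorbs the parabolic time direction in the iteration), concluding $\inf \tilde\varphi \geq -C$. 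The upper bound $\sup\tilde\varphi \leq C$ follows symmetrically or more directly from $\int_M \tilde\varphi\,\omega_0^n = 0$ together with a bound on $\int_M |\tilde\varphi|\,\omega_0^n$ coming from the same circle of estimates. The main obstacle I anticipate is the $T$-uniformity in the auxiliary flow step: ensuring that the comparison function and the constants in the stability/iteration estimates do not degenerate as $T\to\infty$, which is the reason for localizing in time rather than using a single auxiliary equation on all of $[0,T)$.
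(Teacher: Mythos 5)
Your proposal has a genuine gap at the very first step, which unfortunately misdirects the rest of the argument. You claim that condition (4) plus ``an AM--GM-type inequality relating $\prod_j\partial\mathcal{F}/\partial\lambda_j$ to $\sigma_n$'' yields a lower bound $\omega_\varphi^n \geq c(n,\gamma)\,\mathcal{F}^{n/r}\omega_0^n = c\,e^{n\dot\varphi + nF}\omega_0^n$, thereby reducing the general Hessian flow to a Monge--Amp\`ere differential inequality. No such lower bound follows. In fact the natural inequality runs the other way: Euler's identity $\sum_j \lambda_j\,\partial\mathcal{F}/\partial\lambda_j = r\mathcal{F}$, combined with AM--GM (on $\Gamma_n$, say) and condition (4), gives $r\mathcal{F}/n \geq (\prod_j \lambda_j\cdot\prod_j\partial\mathcal{F}/\partial\lambda_j)^{1/n} \geq (\gamma\,\sigma_n\,\mathcal{F}^{n(1-1/r)})^{1/n}$, i.e.\ $\omega_\varphi^n \leq C(n,r,\gamma)\,\mathcal{F}^{n/r}\omega_0^n$ --- an \emph{upper} bound, useless for the subsolution comparison you want. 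Moreover on $\Gamma\setminus\Gamma_n$ the quantity $\prod_j\lambda_j$ need not even be positive, so no such pointwise comparison can hold in general. As a consequence, your subsequent mass identity $\int_M e^{n\dot\varphi}e^{nF}\omega_0^n = \int_M\omega_\varphi^n = \int_M\omega_0^n$ also fails: it is a Monge--Amp\`ere fact, and $\mathcal{F}(\lambda[h_\varphi]) = e^{\dot\varphi + rF}$ carries no direct volume-form information for general $\mathcal{F}$.

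The paper avoids this reduction entirely. It keeps the auxiliary flow (\ref{aux}) exactly as in the Monge--Amp\`ere case, but replaces the comparison operator by the linearization $Lu = -\partial_t u + G^{i\bar{j}}u_{i\bar{j}}$ of (\ref{genflow}), where $G^{i\bar{j}} = \frac{1}{\mathcal{F}}\frac{\partial\mathcal{F}}{\partial h_{i\bar{j}}}$. Applying $L$ to the test function $-\epsilon(-\psi+\Lambda)^\beta - \tilde\varphi - s$ produces the term $-\dot\psi + \tr_G\omega_\psi$, and it is \emph{here} that condition (4) enters, via $\det G^{i\bar j} \geq \gamma\mathcal{F}^{-n/r}$ inside a weighted AM--GM inequality: one gets $-\dot\psi + \tr_G\omega_\psi \geq C_7\, f^{1/(n+1)}\exp\bigl(-\tfrac{n}{r(n+1)}\dot\varphi\bigr)$. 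The Hessian structure is thus absorbed into the concavity/determinant estimate on the linearized coefficients, not into a pointwise comparison of $\omega_\varphi^n$ against $\mathcal{F}^{n/r}$. If you want to repair your plan, the step to replace is the ``reduce to Monge--Amp\`ere inequality'' step: instead carry the operator $G^{i\bar j}$ through the maximum-principle computation and use $\det G^{i\bar j}\geq\gamma\mathcal{F}^{-n/r}$ exactly where you would otherwise have used $\det g_\varphi/\det g_\psi$ in the Monge--Amp\`ere case. The latter portions of your outline (localizing the auxiliary flow in time for $T$-independence, De Giorgi iteration, $p>n+1$ to absorb the time direction, Lemma \ref{gap} for the upper bound) are consistent with the paper's structure and need no change once the first step is fixed.
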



Another direction of the generalization is motivated by Chen-Cheng. The $\Linfty$ estimate for the inverse Monge-$\Am$ flow
\begin{equation}\label{Inverse}
    \left\{
    \begin{aligned}
        &(-\partial_t u)\omega_{\varphi}^n=e^{nF}\omega_{0}{}^n\\
        &\varphi(\cdot,0)=\varphi_0.
    \end{aligned}
    \right.
\end{equation}
is considered in \cite{CC3}. Indeed, we can consider the following general complex Monge-Amp\`ere flow 
\begin{equation}\label{star}
    \left\{
    \begin{aligned}
        &\partial_t\varphi=\Theta\left(\frac{\omega_{\varphi}{}^n}{e^{nF}\omega_{0}{}^n}\right),\\
        &\varphi(\cdot,0)=\varphi_0,
    \end{aligned}
    \right.
\end{equation}
where $\Theta: \mathbb{R}_{+} \to \mathbb{R}$ is a strictly increasing smooth function. In \cite{PZ1}, Picard-Zhang proved the long time existence and convergence of the flow (\ref{star}) under the assumption that $F\in C^\infty(M, \mathbb{R})$. It is the $\Ka$-Ricci flow when $\Theta(y)=\log y$, and the inverse Monge-Amp\`ere flow (\ref{Inverse}) when $\Theta(y)=-\frac{1}{y}$. Indeed, the general parabolic Monge-Amp\`ere flow (\ref{star}) also arises from many other geometric problems. For example, when $\Theta(y)=y$, this is the flow reduced from the Anomaly flow with conformal K\"ahler initial data (see \cite{PPZ}), when $\Theta(y)=y^\frac{1}{3}$, this is the reduction of $G_2$-Laplacian flow over 7 dimensional manifold (\cite{PS}). We can apply the new technique to prove the $L^\infty$ estimate of the solution $\varphi$ to equation (\ref{star}) under a weaker assumption on $F$.

\begin{thm}\label{genmainthm2}
Assume $\Theta(y) = -\frac{1}{y}$, $y$ or $y^{\frac{1}{3}}$, $\mathrm{Ent}_p(F)$ is bounded. Moreover, consider constant $K$ to be $K=\max\left(0,\int_M\Theta(e^{-nF})\omega_0{}^n\right)$. Then, there exists a constant $C$ depending on $n$, $\omega_{0}$, $\varphi_0$, $p$, $\max(K, 0)$ and $\mathrm{Ent}_p(F)$, such that $$\|\tilde{\varphi}\|_{\Linfty\big(M\times[0,T)\big)}\leq C,$$ where $\tilde{\varphi}$ is a normalization of a $C^2$ solution of the flow (\ref{star}).
\end{thm}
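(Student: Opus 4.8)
The plan is to reduce each of the three cases to a time-dependent complex Monge-Ampère equation of the form $\omega_\varphi^n = e^{G}\omega_0^n$ and then verify that $e^G$ satisfies a $p$-entropy bound uniform in $T$, so that the main $L^\infty$ argument of section \ref{sectaux} (the local auxiliary flow comparison, exactly as in the proof of Theorem \ref{MainThm}) applies verbatim. Writing $\dot\varphi = \Theta\!\left(\omega_\varphi^n/(e^{nF}\omega_0^n)\right)$ and inverting $\Theta$, we get $\omega_\varphi^n/(e^{nF}\omega_0^n) = \Theta^{-1}(\dot\varphi)$, hence $\omega_\varphi^n = e^{nF}\,\Theta^{-1}(\dot\varphi)\,\omega_0^n = e^{G}\omega_0^n$ with $G = nF + \log\Theta^{-1}(\dot\varphi)$. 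Thus the whole problem is to control $\log\Theta^{-1}(\dot\varphi)$ from above in $L^\infty$ and from below in an integral sense, since then $\mathrm{Ent}_p(G)$ is bounded by $\mathrm{Ent}_p(F)$ together with these two estimates (using that $|a+b|^p \le 2^{p-1}(|a|^p+|b|^p)$ and splitting the domain by the sign of $G$; the $e^{nF}$ weight must be replaced by $e^{G}$, but an upper bound on $\dot\varphi$ converts one into the other up to a constant). For the three prescribed choices of $\Theta$, note $\Theta$ is a fixed concrete function, so $\Theta^{-1}$ and $\log\Theta^{-1}$ are explicit: for $\Theta(y)=y$, $\log\Theta^{-1}(s)=\log s$ (valid only for $s>0$); for $\Theta(y)=y^{1/3}$, $\log\Theta^{-1}(s)=3\log s$ (again $s>0$); for $\Theta(y)=-1/y$, $\log\Theta^{-1}(s)=-\log(-s)$ (valid for $s<0$). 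In all cases the sign constraint on $\dot\varphi$ is automatic because $\omega_\varphi^n/(e^{nF}\omega_0^n) > 0$ and lies in the range of $\Theta$.

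The key step is the a priori bound on $\dot\varphi$. Following Cao's differentiated-flow trick, I would differentiate the flow equation \eqref{star} in $t$: since $\partial_t(\omega_\varphi^n/\omega_0^n) = (\omega_\varphi^n/\omega_0^n)\,\Delta_\varphi\dot\varphi$, one finds that $Q:=\dot\varphi$ (or a suitable barrier modification of it, e.g. $Q = \dot\varphi + A t$ or $\dot\varphi \pm A\varphi$ for appropriate $A$ to absorb curvature and the $F$-independence of time) satisfies a parabolic inequality of the form $(\partial_t - a^{i\bar j}\partial_i\partial_{\bar j})Q \le (\text{lower order})$, where $a^{i\bar j}$ is a positive-definite coefficient depending on $\Theta'$ and the metric $\omega_\varphi$. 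Because $F$ is time-independent, the inhomogeneous term is benign and the maximum principle on $M\times[0,t]$ gives $\sup_{M\times[0,T)}\dot\varphi \le C(n,\omega_0,\varphi_0,\Theta,\inf_M F)$; but $\inf_M F$ is not assumed bounded, so the subtlety — and the reason the integral hypothesis enters — is that we only get the \emph{upper} bound on $\dot\varphi$ in terms of $\sup$-type data at $t=0$ (which involves $\varphi_0$ and $\Theta(\omega_{\varphi_0}^n/(e^{nF}\omega_0^n))$, controlled since $\varphi_0$ is fixed and we may assume $F\in C^2$ along the flow), and this upper bound is exactly what converts the $e^{nF}$ weight to the $e^G$ weight in the entropy estimate. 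For the lower integral control of $G$, integrate the flow equation in space: $\frac{d}{dt}\int_M\varphi\,\omega_0^n = \int_M \Theta(\omega_\varphi^n/(e^{nF}\omega_0^n))\,\omega_0^n$, and by concavity of $\Theta$ (true for all three choices on the relevant range) plus Jensen's inequality against the probability measure $\omega_\varphi^n/\int\omega_\varphi^n$... this is where the hypothesis $K=\max(0,\int_M\Theta(e^{-nF})\omega_0^n)$ is tailored: $\int_M\Theta(e^{-nF})\omega_0^n$ is precisely $\int_M \log\Theta^{-1}$ of the "expected" density, so $\int_M G\,\omega_0^n$ inherits a lower bound $-K'$ with $K'$ controlled by $K$ and $\mathrm{Ent}_p(F)$.

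Once $\mathrm{Ent}_p(G)\le C$ uniformly in $T$ and $\int_M G\,\omega_0^n \ge -C$, I would invoke the local auxiliary-flow construction of section \ref{sectaux}: freeze a reference time, build the comparison flow with right-hand side $e^{G}$, and run the De Giorgi–type iteration (the parabolic analogue of Guo–Phong–Tong) on the sublevel sets of $\tilde\varphi$, yielding $\|\tilde\varphi\|_{L^\infty(M\times[0,T))}\le C(n,\omega_0,\varphi_0,p,K,\mathrm{Ent}_p(F))$. I expect the main obstacle to be the $\dot\varphi$ upper bound in the degenerate-$F$ regime: the maximum-principle argument must be set up so that no term like $\inf_M F$ appears on the right — this forces a careful choice of the barrier (likely $\dot\varphi - \varphi + \fint\varphi\,\omega_0^n$ plus a linear-in-$t$ term, comparing to data at $t=0$ only) and a verification that the concavity/monotonicity of each specific $\Theta$ makes the differentiated operator parabolic with the right sign; the rest is bookkeeping to feed the resulting $G$ into the already-established iteration machinery.
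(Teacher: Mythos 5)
Your overall game plan---identify that the crux is an a priori control on $\dot\varphi$, then feed the resulting effective density $e^{G}$ into the auxiliary-flow/De Giorgi machinery---correctly locates the difficulty, and your case $\Theta(y)=-1/y$ is essentially fine since there $\dot\varphi<0$ automatically. But there is a genuine gap for the cases $\Theta(y)=y$ and $\Theta(y)=y^{1/3}$: you propose to obtain a \emph{pointwise} bound $\sup_{M\times[0,T)}\dot\varphi\leq C$ via a differentiated maximum-principle (Cao-type) argument, and this is precisely what cannot be had under the stated hypotheses. At $t=0$ one has $\dot\varphi(\cdot,0)=\Theta\!\left(\omega_{\varphi_0}^n/(e^{nF}\omega_0^n)\right)$, which for $\Theta(y)=y^a$ behaves like $e^{-anF}$ up to a fixed smooth factor; since $F$ is controlled only through $\mathrm{Ent}_p(F)$ and the integral $\int_M e^{-anF}\omega_0^n=K$, the initial datum $\dot\varphi(\cdot,0)$ is not in $L^\infty$, and no choice of barrier in the maximum principle can manufacture a parabolic-boundary bound that is independent of $\inf_M F$. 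You flag this difficulty yourself, but the proposed fix (``a careful choice of the barrier'') does not remove it, because the obstruction is already present at $t=0$ and is intrinsic, not an artifact of the interior estimate. Relatedly, your Jensen/concavity step for the ``lower integral control'' does not close: for $\Theta(y)=y^a$ the quantity $\fint\Theta(\omega_\varphi^n/(e^{nF}\omega_0^n))\,\omega_0^n=\fint e^{-anF}(\omega_\varphi^n/\omega_0^n)^a\,\omega_0^n$ does not factor, and Jensen against $\omega_0^n$ leaves you with $\fint e^{-nF}\omega_\varphi^n$-type expressions that the entropy and $K$ hypotheses do not control, because $\omega_\varphi^n$ may concentrate where $e^{-nF}$ is large.

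The paper deliberately avoids the pointwise bound and instead proves only the \emph{integral} estimate $\sup_t\int_M\dot\varphi\,\omega_0^n\leq C$, which is all that the test-function argument in Lemmas~\ref{keylemma} and \ref{keylem2} actually requires (the remaining $\dot\varphi$-dependence there cancels internally, because $\dot\varphi$ enters once linearly and once through $\exp(-\dot\varphi/(n+1))$, and the function $x\mapsto(x-n-C_3)e^{x/(n+1)}$ is bounded below). For $\Theta(y)=y^a$, the paper obtains the integral bound by a different mechanism than what you propose: it computes $\ddot\varphi=a\,\dot\varphi\,\lap_{\varphi}\dot\varphi$, shows by integration by parts that $\frac{d}{dt}\int_M\dot\varphi\,\omega_\varphi^n=-(a+1)\int_M|\nabla\dot\varphi|^2_{\omega_\varphi}\omega_\varphi^n\leq 0$ (an energy monotonicity), and then passes from the $\omega_\varphi^n$-average to the $\omega_0^n$-average by the pointwise algebraic optimization of $y\mapsto y-l\,y^{1+1/a}$, yielding $\int_M\dot\varphi\,\omega_0^n\leq CK$. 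This monotonicity step is the missing ingredient in your proposal: it is what replaces the unavailable pointwise bound on $\sup\dot\varphi$ and makes the conclusion reachable with only $K$ and $\mathrm{Ent}_p(F)$ as data. To repair your proof, you should drop the maximum-principle $\sup\dot\varphi$ estimate and the attempt to bound $\mathrm{Ent}_p(G)$ directly, and instead aim for the weaker $\int_M\dot\varphi\,\omega_0^n\leq C$ (via the energy monotonicity when $\Theta(y)=y^a$ and the sign $\dot\varphi<0$ when $\Theta(y)=-1/y$), feeding that into the auxiliary-flow test-function argument in the form already built to tolerate an unbounded $\dot\varphi$.
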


Since $\Theta<0$ in the inverse Monge-Amp\`ere equation, we have $K\equiv0$, which means there is no extra condition for this case.

In the rest of this paper, a constant is called universal if it depends only on $n$, $\omega_0$, $\varphi_0$, $p$, $\gamma$, $K$ and $\mathrm{Ent}_p(F)$. 

\

{\bf Acknowledgement.} The author would like to thank Prof. Xiangwen Zhang for the helpful discussions and numerous encouragement throughout the project. The author is also grateful for Prof. Duong H. Phong and Prof. Bin Guo's interest and useful comments. And thanks Prof. Freid Tong for some private conversations which are enlightening.

\section{Auxiliary equations}\label{sectaux}

In this section, we want to find suitable auxiliary equations as in \cite{GPT} and \cite{CC3}. To motivate what a good auxiliary equation should be, we first consider the parabolic Monge-$\Am$ flow  (\ref{star}). To have some monotonicity properties of the auxiliary solutions, we prefer a flow with negative time derivative of $\psi$. Thus the inverse Monge-$\Am$ flow, see (\ref{Inverse}), is a good candidate.

Let us consider the inverse Monge-$\Am$ flow 

\begin{equation}\label{auxeq1}
    \left\{
    \begin{aligned}
        &\left(-\dot{\psi_s}\right)\omega_{\psi}{}^n=f_se^{nF}\omega_0{}^n,\\
        &\psi_s(\cdot,0)=0,
    \end{aligned}
    \right.
\end{equation}
where $f_s=\frac{(-\varphi-s)_{+}}{A_s}$, $A_s=\int_{\Omega_s}(-\varphi-s)e^{nF}\omega_0{}^n\mathrm{d}t$ and $\Omega_s=\{(z,t)|-\varphi(z,t)-s>0\}$.

But such flow has singularities, since the factor $(-\varphi-s)_{+}/A_s$ is not smooth in a neighborhood of $\partial\Omega_s$. Thus we need to consider a sequence of smooth functions $\tau_k$ which convergences uniformly to $\chi_{\mathbb{R}^{+}}$, and replace the right hand side $f_s$ by $\frac{\tau_k( -\varphi-s)}{\int_{\Omega}\tau_k(-\varphi-s)e^{nF}}$. By the Lebesgue dominant theorem, $\psi_{s,k}$ converges to $\psi_s$ uniformly, which means we can always take a limit in the inequalities to get the desired estimates as in \cite{GPT} and \cite{CC3}. To simplify our computations, we will keep using (\ref{auxeq1}) as our auxiliary equation. 

Another crucial modification of our auxiliary flow is we should restrict the integration over the time slices. To make our idea more clear, we need Lemma 2.1 as well as Corollary 2.2 in \cite{CC3} which will be stated below. For reader's convenience, I will also put the proof in \cite{CC3} here.

\begin{lemma}\label{cclemma}
    Consider the inverse Monge-$\Am$ flow
    \begin{equation*}
        \left\{
        \begin{aligned}
            &\left(-\dot{\varphi}\right)\omega_{\varphi}{}^n=e^{nF}\omega_0{}^n\\
            &\varphi|_{t=0}=\varphi_0.
        \end{aligned}
        \right.
    \end{equation*}
Assume that $\int_{M\times[0,T]}e^{nF}\omega_{0}{}^n\mathrm{d}t=C_1<\infty$, then we have $\sup_M|\varphi|\leq C,$ where C depends on $n, \omega_{0}, C_1$ and $\|\varphi_0 \|_{L^{\infty}}$. 
\end{lemma}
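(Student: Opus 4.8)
The plan is to prove the two bounds separately; the upper bound is immediate and the lower bound carries all the content. Since $\omega_\varphi$ is a K\"ahler form, the equation gives $\partial_t\varphi=-e^{nF}\omega_0{}^n/\omega_\varphi{}^n<0$, so $\varphi(\cdot,t)$ is decreasing in $t$; hence $\varphi(\cdot,t)\le\varphi_0\le\|\varphi_0\|_{\Linfty}$, and moreover $\inf_{M\times[0,T]}\varphi=\inf_M\varphi(\cdot,T)$, so it suffices to bound $\varphi(\cdot,T)$ from below.

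The first step toward the lower bound is to record the integral information that the finiteness of $C_1$ provides. Writing $V=\int_M\omega_0{}^n$ and letting $E(u)=\frac{1}{(n+1)V}\sum_{j=0}^n\int_M u\,\omega_u{}^j\wedge\omega_0{}^{n-j}$ be the Monge-Amp\`ere energy, one has $\frac{d}{dt}E(\varphi(t))=\frac1V\int_M\partial_t\varphi\,\omega_\varphi{}^n=-\frac1V\int_M e^{nF}\omega_0{}^n$, and integrating over $[0,T]$ gives $E(\varphi(T))=E(\varphi_0)-C_1/V$. Since $E$ is concave along affine segments one has $E(u)\le\frac1V\int_M u\,\omega_0{}^n$, and therefore $\int_M\varphi(T)\,\omega_0{}^n\ge V\,E(\varphi(T))\ge-V\|\varphi_0\|_{\Linfty}-C_1$. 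A parallel integration by parts (using that $\omega_\varphi$ is closed, that $-\partial_t\varphi>0$, and that $\partial_t\varphi\,\omega_\varphi{}^n=-e^{nF}\omega_0{}^n$) yields $\frac{d}{dt}\int_M(-\varphi)\,\omega_\varphi{}^n\le(n+1)\int_M e^{nF}\omega_0{}^n$, hence $\int_M(-\varphi(T))\,\omega_{\varphi(T)}{}^n\le\|\varphi_0\|_{\Linfty}V+(n+1)C_1$.

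To upgrade these $L^1$-type bounds to an $\Linfty$ bound I would run a De Giorgi/Moser iteration directly on the flow, crucially exploiting the sign $-\partial_t\varphi>0$. For $k\ge\|\varphi_0\|_{\Linfty}$ put $w=(-\varphi-k)_+$; since $\varphi$ decreases, $w(\cdot,0)\equiv0$ and $w(z,\cdot)$ is nondecreasing in $t$. Testing the identity $(-\partial_t\varphi)\,\omega_\varphi{}^n=e^{nF}\omega_0{}^n$ against $w^{q-1}$ and integrating over $M\times[0,T]$, the time derivative contributes the nonnegative term $\frac1q\int_M w(\cdot,T)^q\,\omega_{\varphi(T)}{}^n$ together with a curvature term; expanding $\sqrt{-1}\partial\bar\partial(w^q)$ and using $\sqrt{-1}\partial\bar\partial\varphi=\omega_\varphi-\omega_0$, the curvature term breaks into a nonnegative ``gradient'' term $\sim\int_0^T\!\!\int_M(-\partial_t\varphi)\,w^{q-2}\,\sqrt{-1}\partial w\wedge\bar\partial w\wedge\omega_\varphi{}^{n-1}\,dt$, a nonnegative $\int_0^T\!\!\int_M(-\partial_t\varphi)\,w^{q-1}\,\omega_0\wedge\omega_\varphi{}^{n-1}\,dt$, and a term that recombines with the right-hand side. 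One is left with an inequality of the form
$$\tfrac1q\int_M w(\cdot,T)^q\,\omega_{\varphi(T)}{}^n+\big[\text{nonneg.\ gradient term}\big]\ \le\ (n+1)\int_0^T\!\!\int_{\{-\varphi>k\}}w^{q-1}e^{nF}\omega_0{}^n\,dt\ \le\ (n+1)\,\eta(k)\,\|w\|_{\Linfty}^{q-1},$$
where $\eta(k)=\int_0^T\!\int_{\{-\varphi(\cdot,t)>k\}}e^{nF}\omega_0{}^n\,dt$ satisfies $\eta(k)\le C_1$ and $\eta(k)\to0$ as $k\to\infty$ (by dominated convergence, since $e^{nF}\in L^1(M\times[0,T])$). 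Feeding the gradient term into a Sobolev inequality and iterating over a sequence $k_j\uparrow$, with the integral bounds above as the base step, one concludes that $\{-\varphi(\cdot,T)>k\}$ is empty once $k$ exceeds a constant depending only on $n,\omega_0,\|\varphi_0\|_{\Linfty}$ and $C_1$; together with the upper bound this proves the lemma.

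The main obstacle is the absence of any a priori lower bound on $\omega_\varphi$: the gradient term is naturally weighted by $\omega_\varphi$ (and, through $-\partial_t\varphi$, by $e^{nF}$) rather than by a fixed background metric, while $e^{nF}$ is only assumed integrable in space-time, which would be insufficient on a single time slice. This is precisely where the parabolic inverse structure is indispensable — the extra time integration supplies the missing integrability and the ``bad'' curvature contribution is absorbed into the right-hand side — and it is the reason the inverse Monge-Amp\`ere flow is chosen as the auxiliary flow in Section~\ref{sectaux}; the details follow the scheme of \cite{CC3} and \cite{GPT}.
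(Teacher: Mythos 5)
Your upper bound and your energy argument reproduce the paper's first two steps: monotonicity gives $\varphi\le\varphi_0$, and the formula $\frac{d}{dt}E(\varphi)=\frac1V\int_M\partial_t\varphi\,\omega_\varphi^n=-\frac1V\int_M e^{nF}\omega_0^n$ together with $E(u)\le\fint_M u\,\omega_0^n$ gives a lower bound on $\int_M\varphi\,\omega_0^n$ depending only on $\|\varphi_0\|_{\Linfty}$ and $C_1$. Up to notation (the paper uses the $I$-functional and performs the integration by parts directly rather than quoting concavity), this is the same computation.

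After that your proof diverges from the paper's, and this is where the problem lies. The paper closes the argument in one line: since $\varphi(\cdot,t)$ is $\omega_0$-psh, the classical Green's function estimate gives $\bigl|\fint_M\varphi\,\omega_0^n-\sup_M\varphi\bigr|\le C(\omega_0)$; combined with the integral lower bound and the trivial upper bound this yields a two-sided control of $\sup_M\varphi$, which is all that Corollary~\ref{alpha} — the only consumer of this lemma — uses. (The statement's ``$\sup_M|\varphi|$'' should evidently be read as ``$|\sup_M\varphi|$''; the written proof never attempts a bound on $\inf_M\varphi$.) You instead launch a De Giorgi/Moser iteration on $(-\varphi-k)_+$ aimed at the full $L^\infty$ bound, and you yourself identify the fatal difficulty: the Dirichlet-type term produced by testing the equation is weighted by $\omega_\varphi^{n-1}$, not by a fixed background metric, and there is no a priori lower bound on $\omega_\varphi$, so there is no Sobolev inequality available to feed the iteration. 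Your remark that ``the parabolic inverse structure supplies the missing integrability'' is an assertion, not a mechanism: the extra time integration helps control the \emph{right-hand side} via $\eta(k)\le C_1$, but it does nothing for the left-hand side gradient term that the Sobolev step requires. As written the iteration does not close, and deferring to ``the scheme of \cite{CC3} and \cite{GPT}'' does not fill that hole, since those papers do not use a Moser iteration with an $\omega_\varphi$-weighted gradient term at this juncture. The fix is simply to drop the iteration altogether and invoke the $\omega_0$-psh comparison estimate, which converts your integral bound directly into the needed bound on $\sup_M\varphi$.

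Your auxiliary bound $\int_M(-\varphi(T))\,\omega_{\varphi(T)}^n\le\|\varphi_0\|_{\Linfty}V+(n+1)C_1$ is correctly derived but plays no role in the paper's proof and, because the measure $\omega_{\varphi(T)}^n$ is not comparable to $\omega_0^n$, it does not by itself control $\inf_M\varphi$ either.
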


\begin{proof}
    Since $\dot{\varphi}<0$, we can get the upper bound by $$\sup_M\varphi\leq\sup_M\varphi_0\leq\|\varphi_0 \|_{L^{\infty}}.$$

    To estimate the lower bound of $\sup_M\varphi$, let us consider the $I$-functional $$I(\varphi)=\frac{1}{n+1}\int_M\varphi\sum_{j=0}^n\omega_{0}{}^{n-j}\wedge\omega_{\varphi}{}^j,$$ and its derivative $$\frac{\mathrm{d}}{\mathrm{d}t}I(\varphi)=\int_M\partial_t\varphi\omega_{\varphi}{}^n=-\int_M e^{nF}\omega_{0}{}^n.$$

    Therefore, for any $t'\in[0,T]$, we have $$I(\varphi)-I(\varphi_0)=\int^{t'}_0\frac{\mathrm{d}}{\mathrm{d}t}I(\varphi)\mathrm{d}t=-\int_{M\times[0,t']} e^{nF}\omega_{0}^n\geq -\int_{M\times[0,T]} e^{nF}\omega_{0}^n=C_1,$$
which implies $I(\varphi)$ is bounded from below on $[0, T]$.

    The lower bound estimate of $\int_M \varphi\omega_{0}{}^n$ comes from the integration by part,

    \begin{align*}
        \int_M\varphi\omega_{0}{}^n-I(\varphi)=&\int_M \varphi\frac{1}{n+1}\sum^n_{j=0}\omega_{0}{}^{n-j}\wedge(\omega_{0}{}^j-\omega_{\varphi}{}^j)\\
        =&\frac{1}{n+1}\int_M\varphi\sum^n_{j=0}\omega_{0}{}^{n-j}\wedge\i\dd(-\varphi)\sum^{j-1}_{l=0}\omega_{0}{}^{j-1-l}\wedge\omega_{\varphi}{}^l\\
        =&\frac{1}{n+1}\int_M \i\partial\varphi\wedge\bar{\partial}\varphi\wedge\sum^n_{j=0}\sum^{j-1}_{l=0}\omega_{0}{}^{n-1-l}\wedge\omega_{\varphi}{}^l\geq 0.
    \end{align*}
Therefore, we have $\int_M\varphi\omega_0{}^n\geq C_1$. The last step is to estimate $\sup_M\varphi$ by using a property of $\omega_{0}$-psh functions. Suppose $\varphi$ is a $\omega
_0$-psh function, then we have $$\left|\frac{1}{\mathrm{Vol(M, \omega_0)}}\int_M\varphi\omega_{0}{}^n-\sup_M\varphi\right|\leq C.$$

    Thus we complete the prove, since the last constant depends only on the background metric $\omega_0$.

\end{proof}

\begin{cor}\label{alpha}
    There exists a constant $\alpha_0>0$ depending only on $\omega_{0}$, such that $$\sup_{t\in[0,T]}\int_M e^{-\alpha \varphi}\omega_{0}{}^n\leq C_2,$$ where $C_2$ depends on  $M, \omega_{0}, C_1$ and $\|\varphi_0 \|_{L^{\infty}}$ as well.
\end{cor}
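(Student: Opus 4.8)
The plan is to combine the uniform bound $\sup_M|\varphi|\leq C$ from Lemma \ref{cclemma} with the classical Tian/Skoda-type exponential integrability of $\omega_0$-psh functions. Concretely, a standard fact (the uniform $\alpha$-invariant for a fixed compact K\"ahler manifold, due to Tian, see also H\"ormander's $L^2$ theory) asserts the existence of $\alpha_0=\alpha_0(\omega_0)>0$ and a constant $C(\omega_0)$ such that for every $\omega_0$-psh function $u$ with $\sup_M u=0$ one has $\int_M e^{-\alpha_0 u}\,\omega_0{}^n\leq C(\omega_0)$. First I would apply this to the function $u=\varphi-\sup_M\varphi$, which is $\omega_0$-psh (since $\omega_\varphi=\omega_0+\i\dd\varphi\geq 0$ forces $\varphi$ to be $\omega_0$-psh on each time slice) and normalized so that $\sup_M u=0$. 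This gives $\int_M e^{-\alpha_0\varphi}\,\omega_0{}^n\leq C(\omega_0)\,e^{-\alpha_0\sup_M\varphi}$.

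Next I would absorb the extra factor using Lemma \ref{cclemma}: since $|\sup_M\varphi|\leq C(n,\omega_0,C_1,\|\varphi_0\|_{L^\infty})$, the quantity $e^{-\alpha_0\sup_M\varphi}$ is bounded by a constant with the same dependence, and hence
$$\sup_{t\in[0,T]}\int_M e^{-\alpha_0\varphi}\,\omega_0{}^n\leq C_2,$$
with $C_2$ depending only on $M,\omega_0,C_1,\|\varphi_0\|_{L^\infty}$, as claimed. Taking any $\alpha\in(0,\alpha_0]$ works as well, since $e^{-\alpha\varphi}\leq e^{-\alpha_0\varphi}+1$ pointwise when $\varphi\geq 0$ and more generally one interpolates using the uniform bound on $\varphi$; in any case the statement as written only requires existence of one such $\alpha_0$.

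The only genuine input beyond Lemma \ref{cclemma} is the uniform $\alpha$-invariant estimate for $\omega_0$-psh functions. I expect this to be the main point to pin down carefully: one should cite it as a known result (Tian's $\alpha$-invariant, or equivalently the uniform Skoda integrability / local $L^1$-to-exponential estimate for psh functions patched over a finite cover of $M$ together with the uniform $L^1$ bound on normalized $\omega_0$-psh functions), rather than reprove it. The time-uniformity is immediate because $\alpha_0$ and $C(\omega_0)$ depend only on the fixed background $(M,\omega_0)$, and the only $t$-dependent ingredient, $\sup_M\varphi(\cdot,t)$, was already controlled uniformly in $t$ by Lemma \ref{cclemma}.
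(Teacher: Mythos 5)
Your proposal is correct and follows essentially the same route as the paper: the paper also cites the uniform exponential-integrability estimate (Tian's $\alpha$-invariant, traced back to H\"ormander's local estimate) for normalized $\omega_0$-psh functions, applies it slice-wise to $\varphi(\cdot,t)-\sup_M\varphi(\cdot,t)$, and then invokes the two-sided bound on $\sup_M\varphi$ from Lemma~\ref{cclemma} to remove the normalization. The only difference is that you have written out the one-line absorption step $\int_M e^{-\alpha_0\varphi}\,\omega_0{}^n\leq C(\omega_0)\,e^{-\alpha_0\sup_M\varphi}$ explicitly, which the paper leaves implicit.
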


This is a flow version of H$\mathrm{ \ddot{o}}$rmander's result, see lemma 4.4 in \cite{Horm1} and \cite{Tian1} for local and global version of such integral estimate respectively.

\begin{proof}
    Since $\varphi$ is a $\omega_{0}$-psh function for every $t\in[0.T]$, we have $$\sup_{t\in[0,T]}\int_M e^{-\alpha (\varphi-\sup_M\varphi)}\omega_{0}{}^n\leq C.$$

    From Lemma \ref{cclemma}, the uniform bound of $\sup_M\varphi$ gives us the desired inequality.
\end{proof}

The above corollary \ref{alpha} gives us a uniform bound on each time slice. In the following sections, the corollary will be applied to the integrals over the space-time. If we consider original auxiliary equation (\ref{auxeq1}), it will be hard to get a time-independent $\Linfty$ estimate. Thanks for the uniform bound on each time slice, we can restrict on some smaller time intervals to construct our auxiliary equations.

To get the $L^{\infty}$ estimate, we need also consider the normalization in theorem \ref{MainThm}, which follows the same normalization in \cite{PZ1}. Moreover we need to use the domain $\Tilde{\Omega}_s=\{(z,t)|-\tilde{\varphi}(z,t)-s>0\}$ to replace $\Omega_s$.

For any $t_0\in[0,T-1)$, let us consider a family of regions $\Omega_{s,t_0}=\tilde{\Omega}_{s}\cap\left(M\times[t_0,t_0+1]\right)$, and choose a family of auxiliary equations,

\begin{equation}\label{aux}
    \left\{
    \begin{aligned}
        &\left(-\dot{\psi}_{s,t_0}\right)\omega_{\psi}{}^n=f_{s,t_0}e^{nF}\omega_0{}^n\\
        &\psi_{s,t_0}(\cdot,0)=0,
    \end{aligned}
    \right.
\end{equation}
where $A_{s,t_0}=\int_{\Omega_{s,t_0}}\chi_{_{\Omega_{s,t_0}}}(-\tilde{\varphi}-s)\omega_{0}{}^n\mathrm{d}t$ and $f_{s,t_0}$ is the normalization of $\chi_{_{\Omega_{s,t_0}}}(-\tilde{\varphi}-s)$.

By corollary \ref{alpha}, the cut-off along the time controls the integration inequality over the space-time
\begin{equation}\label{alphainv}
    \int_{\Omega_{s,t_0}}e^{-\alpha \psi_{s,t_0}}\omega_{0}{}^n\mathrm{d}t\leq C_2,
\end{equation}
which will be crucial to get the desired $L^{\infty}$ estimate since the above inequality (\ref{alphainv}) is $T$-independent.


The family of auxiliary equations (\ref{aux}) meets the same problem as in (\ref{auxeq1}). To be precise, we also need to apply $\tau_k$ to remove the singularities. For the same consideration, we will still keep using (\ref{aux}) in the following sections.

The extra integral condition was chosen to make the normalization $\tilde{\varphi}$ satisfy the following three properties.

\begin{lemma}\label{gap}
    Let $\tilde{\varphi}$ be given in theorem (\ref{MainThm}), then we have,
    \begin{enumerate}
        \item $\sup_{t\in[0,T)}\int_M\dot{\varphi}\omega_0{}^n\leq C_3$,
        \item $\tilde{\varphi}\leq C_3$,
        \item $\int_M|\tilde{\varphi}|\omega_0{}^n\leq C_3$,
    \end{enumerate}
    where $C$ is universal.
\end{lemma}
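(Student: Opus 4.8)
The plan is to exploit the structure of the $\Ka$-Ricci flow \eqref{MainFlow} together with the hypotheses $\int_M nF\,\omega_0{}^n\geq -K$ and $\mathrm{Ent}_p(F)<\infty$. For property (1), I would start from the flow equation in the form $\omega_\varphi{}^n = e^{\dot\varphi + nF}\omega_0{}^n$ and integrate over $M$: since $\int_M \omega_\varphi{}^n = \int_M\omega_0{}^n = \mathrm{Vol}(M,\omega_0)$ is a fixed cohomological constant, Jensen's inequality applied to the convex function $e^{x}$ (with respect to the normalized measure $\omega_0{}^n/\mathrm{Vol}$) gives
\[
\mathrm{Vol}(M,\omega_0) = \int_M e^{\dot\varphi + nF}\,\omega_0{}^n \geq \mathrm{Vol}(M,\omega_0)\cdot \exp\!\left(\frac{1}{\mathrm{Vol}(M,\omega_0)}\int_M(\dot\varphi+nF)\,\omega_0{}^n\right),
\]
so $\int_M\dot\varphi\,\omega_0{}^n \leq -\int_M nF\,\omega_0{}^n \leq K$. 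Taking the supremum over $t$ yields property (1) with $C_3 = K$ (or $\max(K,0)$).

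For property (2), I would use the fact that $\frac{d}{dt}\int_M\varphi\,\omega_0{}^n = \int_M\dot\varphi\,\omega_0{}^n$, which by property (1) is bounded above by $C_3$, hence $\int_M\tilde\varphi\,\omega_0{}^n$... is identically zero by definition, so instead the right monotone quantity is $\fint\varphi\,\omega_0{}^n$, whose time derivative is $\leq C_3$. Actually the cleaner route: since $\tilde\varphi$ is $\omega_0$-psh with $\int_M\tilde\varphi\,\omega_0{}^n = 0$, the standard compactness/mean-value property of $\omega_0$-psh functions (the same one invoked in Lemma \ref{cclemma}) gives $\sup_M\tilde\varphi \leq \fint_M\tilde\varphi\,\omega_0{}^n + C(\omega_0) = C(\omega_0)$, which is property (2); here one only needs that $\tilde\varphi$ has zero average, not any time bound. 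Wait — one must be careful that this sup estimate requires an a priori control; the honest statement is $\sup_M\tilde\varphi - \fint\tilde\varphi\,\omega_0{}^n \leq C(\omega_0)$ for $\omega_0$-psh functions, giving (2) immediately. For property (3), since $\tilde\varphi$ is $\omega_0$-psh and $\sup_M\tilde\varphi\leq C_3$, the function $C_3-\tilde\varphi\geq 0$ is $\omega_0$-psh up to a constant, and the uniform $L^1$ bound on $\omega_0$-psh functions with bounded sup — equivalently, the compactness of the set of normalized $\omega_0$-psh functions in $L^1$, or a direct Green's-function argument — gives $\int_M(C_3-\tilde\varphi)\,\omega_0{}^n \leq C(\omega_0)$, hence $\int_M|\tilde\varphi|\,\omega_0{}^n \leq \int_M\tilde\varphi^{-}\,\omega_0{}^n + \int_M\tilde\varphi^{+}\,\omega_0{}^n \leq C$, using $\int\tilde\varphi^{+} = \int\tilde\varphi^{-}$ since the total integral is zero.

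The main obstacle I anticipate is property (1): it is the only place where the entropy and the integral hypothesis on $F$ genuinely enter, and one must make sure the Jensen-type argument is applied to the correct normalized measure and that $e^{nF}\omega_0{}^n$ (not $\omega_0{}^n$) is the relevant reference measure in the flow — in particular one should double-check whether $\mathrm{Ent}_p(F)<\infty$ is needed here to ensure $\int_M nF\,e^{nF}\omega_0{}^n$ or $\int_M nF\,\omega_0{}^n$ is finite so the inequality is not vacuous. Properties (2) and (3) are then essentially soft consequences of the $\omega_0$-psh normalization together with classical potential-theoretic estimates already used in Lemma \ref{cclemma} and Corollary \ref{alpha}, so I expect them to be short once (1) is in hand.
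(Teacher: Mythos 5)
Your proof is correct and follows essentially the same route as the paper. For property (1) you apply Jensen to the convex function $e^x$ while the paper applies it to the concave function $\log$ after splitting $\dot\varphi = \log(\omega_\varphi^n/\omega_0^n) - nF$; these are the same estimate. For (2) you invoke the mean-value bound $\sup_M u - \fint_M u\,\omega_0^n \leq C(\omega_0)$ for $\omega_0$-psh functions (the result already quoted in Lemma \ref{cclemma}), whereas the paper rederives the same bound from Green's formula and $\lap\varphi > -n$; and for (3) you and the paper both conclude from $\int_M\tilde\varphi\,\omega_0^n=0$ that $\int_M|\tilde\varphi| = 2\int_M\tilde\varphi^+ \leq 2C_3V$, so the invocation of an $L^1$-compactness statement in your write-up is superfluous. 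One small clarification regarding your anticipated obstacle: $\mathrm{Ent}_p(F)<\infty$ is not used in this lemma; only the lower bound on $\int_M nF\,\omega_0^n$ enters in (1).
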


\begin{proof}
    For 1, it comes directly from the following estimates,
    \begin{align*}
        \fint_M\dot{\varphi}\omega_0{}^n&=\fint_M\log\left(\frac{\omega_{\varphi}{}^n}{e^{nF}\omega_0{}^n}\right)\omega_0{}^n=\fint\log\left(\frac{\omega_{\varphi}{}^n}{\omega_0{}^n}\right)\omega_0{}^n - \fint\log\left(e^{nF}\right)\omega_0{}^n\\
        &\leq\log\left(\fint\omega_{\varphi}{}^n\right)- \fint\log\left(e^{nF}\right)\omega_0{}^n = -\fint\log\left(e^{nF}\right)\omega_0{}^n\\
        &\leq K.
    \end{align*}
    The first estimate comes from Jensen's inequality while the second one comes from the assumption on $F$. The average integral is chosen with respect to $V=\int_M\omega_0{}^n$.

    To prove 2 and 3, let's start from Green's formula,
    $$\Tilde{\varphi} = \fint_M\Tilde{\varphi}\omega_0{}^n - \int_MG\lap\tilde{\varphi}\omega_0{}^n=- \int_MG\lap\tilde{\varphi}\omega_0{}^n=- \int_MG\lap\varphi\omega_0{}^n,$$ where $G$ is the Green's function with respect to $\omega_0$.

    It is well known that the Green's function $G$ could be shifted to be non-negative and with $L^1$ norm bound. Combine with the inequality $\tr_{\omega_0}\omega_{\varphi}=n+\lap\varphi> 0$ and Green's formula, we have the following universal estimate $\tilde{\varphi}\leq C_3$.

    Let $I_{+}$ and $I_{-}$ be the integral of positive and negative parts of $\tilde{\varphi}$ respectively. Then we have $0=\int_M\Tilde{\varphi} = I_+ - I_-$ and $I_+\leq C_3V$. Thus $$\int_M|\tilde{\varphi}| = I_++I_-=2I_+\leq C_3.$$


\end{proof}









\section{Entropy bounded by energy}\label{secent}

From section \ref{sectaux}, we get a good choice of a family of auxiliary equations (\ref{aux}). The following lemma is crucial for the proof of theorem \ref{MainThm},

\begin{lemma}\label{keylemma}
    Suppose $\varphi$ is as in theorem \ref{MainThm}, and $\psi_{s,t_0}$ is a solution of the auxiliary flow (\ref{aux}), then there are constants $\beta,\ \epsilon$ and $\Lambda$ with $\beta=(n+1)/(n+2)$, $\epsilon^{n+2}=\left(\frac{n+2}{n+1} \right)^{n+2}\Lambda$ and $\epsilon^{n+2}=c^{n+1}A_{s,t_0}$, where $c$ is a universal constant, such that 
    \begin{equation}\label{key}
        -\epsilon(-\psi_{s,t_0}+\Lambda)^{\beta}-\tilde{\varphi}-s\leq0,
    \end{equation}
    holds on $M\times[t_0,t_0+1]$.
\end{lemma}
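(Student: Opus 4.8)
\emph{Proof strategy.} The plan is to establish (\ref{key}) by a parabolic maximum principle for the auxiliary test function
\[
\Phi:=-\epsilon\bigl(-\psi_{s,t_0}+\Lambda\bigr)^{\beta}-\tilde{\varphi}-s
\]
on $Q:=M\times[t_0,t_0+1]$, whose parabolic boundary is just $M\times\{t_0\}$ since $M$ is closed. The choice $\beta=\tfrac{n+1}{n+2}$ is forced by the identities $(n+2)(\beta-1)=-1$ and $(1-\beta)(n+1)=\beta$, while the relation $\epsilon^{n+2}=\bigl(\tfrac{n+2}{n+1}\bigr)^{n+2}\Lambda$ is exactly $\epsilon\beta\Lambda^{\beta-1}=1$. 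Since $\dot{\psi}_{s,t_0}\le0$ and $\psi_{s,t_0}$ starts at $0$, we have $-\psi_{s,t_0}\ge0$, so $\Phi$ is well defined and continuous on $Q$; if $A_{s,t_0}=0$ there is nothing to prove, so assume $A_{s,t_0}>0$, hence $\epsilon,\Lambda>0$. As in section~\ref{sectaux}, the non-smoothness of $f_{s,t_0}$ is removed by running the argument with the $\tau_k$-regularization of $\chi_{\Omega_{s,t_0}}(-\tilde\varphi-s)$ and letting $k\to\infty$, which I suppress below.

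Suppose $\max_Q\Phi>0$, attained at $(z_*,t_*)$, and consider first the case $t_*\in(t_0,t_0+1]$. There $\partial_t\Phi\ge0$ and $\i\dd_z\Phi\le0$, and $\Phi(z_*,t_*)>0$ forces $-\tilde\varphi-s>\epsilon(-\psi+\Lambda)^{\beta}>0$, so $(z_*,t_*)\in\Omega_{s,t_0}$ and the auxiliary equation reads $(-\dot\psi)\,\omega_{\psi}^{n}=\tfrac{-\tilde\varphi-s}{A_{s,t_0}}\,e^{nF}\omega_{0}^{n}$ there. Put $a:=\epsilon\beta(-\psi+\Lambda)^{\beta-1}$; since $\beta-1<0$ we get $0<a\le\epsilon\beta\Lambda^{\beta-1}=1$. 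Using $\i\dd(-\psi+\Lambda)^{\beta}\le-\beta(-\psi+\Lambda)^{\beta-1}\i\dd\psi$ (the $\i\partial\psi\wedge\bar\partial\psi$-term being $\le0$ as $\beta(\beta-1)<0$), the inequality $\i\dd_z\Phi\le0$ yields $\i\dd\tilde\varphi\ge a\,\i\dd\psi$, hence $\omega_{\varphi}\ge(1-a)\omega_0+a\,\omega_{\psi}\ge a\,\omega_{\psi}$ and so $\omega_{\varphi}^{n}\ge a^{n}\omega_{\psi}^{n}$ at $(z_*,t_*)$. Moreover $\partial_t\Phi\ge0$ reads $\dot{\tilde\varphi}\le a\dot\psi\le0$, so by Lemma~\ref{gap}(1), $\dot\varphi=\dot{\tilde\varphi}+\fint_M\dot\varphi\,\omega_0^{n}\le a\dot\psi+C_3$. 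Inserting these into $\omega_{\varphi}^{n}=e^{\dot\varphi+nF}\omega_0^{n}$ and the auxiliary equation,
\[
\frac{\epsilon(-\psi+\Lambda)^{\beta}}{A_{s,t_0}}<\frac{-\tilde\varphi-s}{A_{s,t_0}}=(-\dot\psi)\,\frac{\omega_{\psi}^{n}}{e^{nF}\omega_0^{n}}\le a^{-n}(-\dot\psi)\,e^{\dot\varphi}\le a^{-n}e^{C_3}(-\dot\psi)\,e^{a\dot\psi}\le\frac{e^{C_3}}{e}\,a^{-(n+1)},
\]
where the last step uses $\sup_{x>0}x e^{-ax}=\tfrac{1}{ea}$. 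Since $a^{-(n+1)}=(\epsilon\beta)^{-(n+1)}(-\psi+\Lambda)^{(1-\beta)(n+1)}=(\epsilon\beta)^{-(n+1)}(-\psi+\Lambda)^{\beta}$, cancelling $(-\psi+\Lambda)^{\beta}$ gives $\epsilon^{n+2}<\tfrac{e^{C_3}}{e}\beta^{-(n+1)}A_{s,t_0}=\tfrac{e^{C_3}}{e}\bigl(\tfrac{n+2}{n+1}\bigr)^{n+1}A_{s,t_0}$, which contradicts $\epsilon^{n+2}=c^{n+1}A_{s,t_0}$ once the universal constant $c$ is fixed large enough (e.g. $c=\tfrac{n+2}{n+1}e^{C_3/(n+1)}$).

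The remaining case $t_*=t_0$ — a positive maximum on the parabolic boundary, where $\psi_{s,t_0}\equiv0$ and $\Phi$ degenerates to $-\epsilon\Lambda^{\beta}-\tilde\varphi-s$ — is the main obstacle, since the comparison above has no evolution of $\psi$ to act against (and, as one checks, extending the domain backward to $[0,t_0]$ does not help on its open part). The plan is to process the unit time windows $[k,k+1]$ in increasing order and feed the $\Linfty$ bound already obtained on the preceding window into the slice $t=t_0$ (for $k=0$ one uses instead the controlled initial datum $\tilde\varphi_0$ together with Lemma~\ref{gap} and Corollary~\ref{alpha}); once $s$ exceeds that bound one has $-\tilde\varphi(\cdot,t_0)-s\le0\le\epsilon\Lambda^{\beta}$, so $\Phi(\cdot,t_0)\le0$ and the maximum is pushed into $M\times(t_0,t_0+1]$, where the previous paragraph applies. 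The delicate part is the bookkeeping: one must verify that the threshold imposed on $s$ and the constant produced by the subsequent De Giorgi-type iteration remain universal and do not deteriorate from one window to the next, so that (\ref{key}) holds in the stated $T$-independent form; granting this, letting $k\to\infty$ in the $\tau_k$-regularized problem completes the proof.
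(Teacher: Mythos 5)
Your argument for a maximum at $t_*\in(t_0,t_0+1]$ is correct, and it is the same argument as the paper's in different clothing: the paper computes $LH$ with $L=-\partial_t+\lap_{\omega_\varphi}$ and applies the arithmetic--geometric mean inequality to $-\dot\psi+\tr_{\omega_\varphi}\omega_\psi$, while you extract the comparison $\omega_\varphi\ge a\,\omega_\psi$ from $\i\dd_z\Phi\le0$ and then use the scalar bound $\sup_{x>0}xe^{-ax}=1/(ea)$. Both routes produce the same power balance and the same role for the constant $c$, with $\beta=(n+1)/(n+2)$ forced by $(\beta-1)(n+1)=-\beta$ and $\epsilon\beta\Lambda^{\beta-1}=1$ equivalent to $\epsilon^{n+2}=\bigl(\tfrac{n+2}{n+1}\bigr)^{n+2}\Lambda$. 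Your derivation is if anything cleaner than the paper's computation and uses Lemma~\ref{gap}(1) and the two flow equations in exactly the places the paper does.

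Where your proposal and the paper diverge is the case $t_*=t_0$, and you have correctly put your finger on a real gap. The paper's dichotomy reads: either $x_0\notin\Omega^\circ_{s,t_0}$, in which case $-\tilde\varphi(x_0)-s\le0$ and hence $H(x_0)\le0$; or $x_0\in\Omega^\circ_{s,t_0}$, where the maximum principle applies. This omits precisely the case you isolate: $t_*=t_0$, $-\tilde\varphi(x_0)-s>0$. Such a point is not in the interior $\Omega^\circ_{s,t_0}$ (so the maximum-principle branch does not apply: at a left endpoint one only has $\partial_tH\le0$, so the sign of $LH=-\partial_tH+\lap_{\omega_\varphi}H$ is indeterminate), yet the bound $-\tilde\varphi-s\le0$ also fails. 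Since $\psi_{s,t_0}(\cdot,t_0)=0$, there $H=-\epsilon\Lambda^\beta-\tilde\varphi(\cdot,t_0)-s$, which can be positive. You also correctly observe that pushing the comparison back to $[0,t_0]$ is of no help: there $\dot\psi=0$, the auxiliary equation gives no information, and the maximum-principle inequality degenerates. So the paper's own proof is incomplete at this point, and your critique is valid.

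Your proposed repair, however, is not yet a proof, and I would be cautious about it for a quantitative reason. Feeding the $\Linfty$ bound on $[k,k+1]$ into the $t_0=k+1$ slice means the De Giorgi starting level $s_0$ for the window $[k+1,k+2]$ must dominate the previous window's sup-bound $s^*_{k+1}$. But $S_\infty=s_0+\tfrac{1}{1-2^{-\delta_0}}$ then satisfies $s^*_{k+2}\le s^*_{k+1}+c$ for a universal $c>0$, and inductively $s^*_k\lesssim s^*_0+ck$. This grows linearly in the number of windows, so the $T$-independence of the final estimate — the whole point of restricting the auxiliary flow to unit time intervals — is lost. You flag the bookkeeping as ``delicate,'' but this is sharper than that: the naive cascade visibly deteriorates, and some additional mechanism (for example, a direct estimate at the initial slice of each window that does not pass through $S_\infty$ of the previous one, perhaps by exploiting the $L^1$ bound of Lemma~\ref{gap}(3) together with Corollary~\ref{alpha}, or a reformulation in which the auxiliary comparison is run on $[t_0-1,t_0+1]$ so that $t=t_0$ becomes interior) is needed to close the argument in a $T$-uniform way. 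As it stands, the gap at $t_*=t_0$ remains open both in your proposal and in the paper's proof.
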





We will use $\psi$ to denote $\psi_{s,t_0}$ in the following estimates. Let us consider the test function $H=-\epsilon\left(-\psi+\Lambda\right)^{\beta}-\tilde{\varphi}-s$ and the linearization operator $L=-\frac{\partial}{\partial t}+\lap_{\omega_{\varphi_t}}$ of the the $\Ka$-Ricci flow (\ref{MainFlow}). The idea of the following argument comes from \cite{GPT}.

Suppose if the maximum of $H$ is attained at some point $x_0=(z_0,t_0)$ outside $\Omega^{\circ}_{s,t_0}$, then we have $H\leq H(x_0)\leq-\tilde{\varphi}(x_0)-s\leq0$. To complete the proof, we only need to assume the maximal point $x_0$ of $H$ is in $\Omega^{\circ}_{s,t_0}$, and then apply the maximal principle.



Since we have chosen $0<\beta<1$ and $1-\beta\epsilon\Lambda^{\beta - 1}$, we can reduce the inequality derived from the maximal principle. To be specific, we have the following inequality holds at the maximal point $x_0$,
\begin{align*}
    0\geq& LH= -\beta\epsilon(-\psi+\Lambda)^{\beta-1}\dot{\psi}+\dot{\varphi}-{\tiny \fint}\dot{\varphi}\omega_0{}^n\\
    &+\beta\epsilon(-\psi+\Lambda)^{\beta-1}\lap_{\omega_{\varphi}}\psi+\beta(1-\beta)\epsilon(-\varphi+\Lambda)^{\beta-2}|\partial\varphi|^2_{\omega_{\varphi}}-\lap_{\omega_{\varphi}}\varphi\\
    \geq&\beta\epsilon(-\psi+\Lambda)^{\beta-1}\left(-\dot{\psi}\right)-\left(-\dot{\varphi}\right)+\beta\epsilon(-\psi+\Lambda)^{\beta-1}\lap_{\omega_{\varphi}}\psi-\lap_{\omega_{\varphi}}\varphi-C_3\\
    =&\beta\epsilon(-\psi+\Lambda)^{\beta-1}\left(-\dot{\psi}\right)-\left(-\dot{\varphi}\right)+\beta\epsilon(-\psi+\Lambda)^{\beta-1}\tr_{\omega_{\varphi}}\omega_{\psi}-\\
    &\tr_{\omega_{\varphi}}\omega_{\varphi}+\left(1-\beta\epsilon(-\psi+\Lambda)^{\beta-1} \right)\tr_{\omega_{\varphi}}\omega_0\\
    \geq&\beta\epsilon(-\psi+\Lambda)^{\beta-1}\left(-\dot{\psi}+\tr_{\omega_{\varphi}}\omega_{\psi}\right)-\left(-\dot{\varphi}+C_3+n\right),
\end{align*}
the last estimate comes from $1-\beta\epsilon(-\psi+\Lambda)^{\beta-1}\geq 1-\beta\epsilon\Lambda^{\beta-1}=0$.



Then we need to deal with the factor $-\dot{\psi}+\tr_{\omega_{\varphi}}\omega_{\psi}$ which is the main term of the estimate. By the geometric-arithmetic inequality, we have
\begin{equation}\label{geo}
    -\dot{\psi}+\tr_{\omega_{\varphi}}\omega_{\psi}\geq-\dot{\psi}+n\left(\omega_{\psi}^n/\omega_{\varphi}^n\right)^{\frac{1}{n}}.
\end{equation}


Combine (\ref{geo}) with two flow equations (\ref{MainFlow}) and (\ref{aux}), and use the geometric-arithmetic inequality again, we have 
\begin{equation}\label{4keylemma}
    \begin{aligned}
        -\dot{\psi}+\tr_{\omega_{\varphi}}\omega_{\psi}&\geq-\dot{\psi}+n\left(\omega_{\psi}^n/\omega_{\varphi}^n\right)^{\frac{1}{n}}\\
        &\geq-\dot{\psi}+nf^{1/n}\exp\left(-\frac{1}{n}\dot{\varphi}\right)(-\dot{\psi})^{-1/n}\\
        &\geq(n+1)f^{1/(n+1)}\exp\left(-\frac{1}{n+1}\dot{\varphi}\right).
    \end{aligned}
\end{equation}



Thus replace $-\dot{\psi}+\tr_{\omega_{\varphi}}\omega_{\psi}$ by (\ref{4keylemma}), we have the following estimate holds at $x_0$
\begin{align*}
    0\geq&LH\geq(n+1)\beta\epsilon(-\psi+\Lambda)^{\beta-1}f^{1/(n+1)}\exp\left(-\frac{1}{n+1}\dot{\varphi}\right)-\left(-\dot{\varphi}+C_3+n\right)\\
    \geq&\left[(n+1)\beta\epsilon(-\psi+\Lambda)^{\beta-1}f^{1/(n+1)}-\left(-\dot{\varphi}+C_3+n\right)\exp\left(\frac{1}{n+1}\dot{\varphi}\right)\right]\exp\left(-\frac{1}{n+1}\dot{\varphi}\right).
\end{align*}


Since the exponential function is positive, we can simplify it by
\begin{equation}\label{middlesteps}
    0\geq(n+1)\beta\epsilon(-\psi+\Lambda)^{\beta-1}f^{1/(n+1)}+\left(\dot{\varphi}-C_3-n\right)\exp\left(\frac{1}{n+1}\dot{\varphi}\right),
\end{equation}
which is close to the inequality (\ref{key}) in lemma \ref{keylemma}. Let us consider a function $h(x)=(x-n-C_3)\exp\left(\frac{1}{n+1}x\right)$. It is easily to see this function has a universal lower bound $-C_4$, where $C_4=(n+1)\exp(\frac{C_3-1}{n+1})$. Thus we have
\begin{equation}\label{middle}
    (n+1)\beta\epsilon(-\psi+\Lambda)^{\beta-1}f^{1/(n+1)}-C_4\leq0.
\end{equation}



The inequality (\ref{middle}) is equivalent to
\begin{equation}
   \left(\frac{(n+1)\beta}{C_4}\right)^{n+1}\epsilon^{n+1}\frac{-\tilde{\varphi}-s}{A_{s,t_0}}\leq(-\psi+\Lambda)^{(n+1)(1-\beta)} ,
\end{equation}
which is the test function when we chose the constants $\beta$, $\epsilon$ and $\Gamma$ as the statement of lemma \ref{keylemma}. Thus we have $$H\leq H(x_0)\leq 0,$$ which completes the proof.


From the above lemma \ref{keylemma}, we have 
\begin{equation}\label{originalineq}
    \frac{-\tilde{\varphi}-s}{A_{s,t_0}^{1/(n+2)}}\leq \Bigg(c(-\psi+\Lambda)\Bigg)^{(n+1)/(n+2)},
\end{equation}
where $c=\left(\frac{(n+1)^2\epsilon}{(n+2)C_4}\right)^{n+2}$.

The following estimate comes from (\ref{originalineq}), $$\int_{\Omega_{s,t_0}}\exp \left[\lambda\left( \frac{-\tilde{\varphi}-s}{A_{s,t_0}^{1/(n+2)}} \right)^{\frac{n+2}{n+1}}\right]\omega_{0}{}^n\mathrm{d}t\leq \int_{\Omega_{s,t_0}}\exp\Bigg\{\lambda c(-\psi+\Lambda) \Bigg\}\omega_{0}{}^n\mathrm{d}t.$$



If the universal constant $\lambda$ is chosen to make $\lambda c=\alpha$, then by corollary \ref{alpha}, the right hand side of the above inequality is bounded by universal constants as well as a function of $A_{s,t_0}$. 

Let us suppose $E=\sup_{t_0\in[0,T-1)}\int_{M\times[t_0,t_0+1]}(-\tilde{\varphi})e^{nF}\omega_0{}^n\mathrm{d}t$, then we have 

\begin{equation}\label{theintineq}
    \int_{\Omega_{s,t_0}}\exp \left[\lambda\left( \frac{-\tilde{\varphi}-s}{A_{s,t_0}^{1/(n+2)}} \right)^{\frac{n+2}{n+1}}\right]\omega_{0}{}^n\mathrm{d}t\leq C(E),
\end{equation}
where $C(E)$ denotes a constant not only depends on the universal constants but also depends on $E$. We will use the same notation to denote a constant depending on the same parameters in the following of this paper. 


To end this section, we will use the De Giorgi iteration method to derive the $C^0$ estimate by assuming $E$ is universally bounded. In the next section, we will apply the ABP estimate to get the universal bound on $E$ which will complete the proof of theorem \ref{MainThm}. To set up for the iteration procedure, we need such inequality to run the iteration
\begin{equation}\label{iteration}
    r\phi_{t_0}(s + r) \leq A_{s, t_0} \leq B_0\phi_{t_0}(s)^{1 + \delta_0},
\end{equation}
where $\phi_{t_0}(s)=\int_{\Omega_{s,t_0}}e^{nF}\omega_0{}^n\mathrm{d}t$. We will use $\phi$ to denote $\phi_{t_0}$ for simplicity.


The following lemma is the method of De Giorgi iteration mentioned above,

\begin{lemma}
    Let $\Phi:\mathbb{R}_{+}\rightarrow\mathbb{R}_{+}$ be a decreasing right continuous function with \newline $\lim_{s\rightarrow\infty}\Phi(s)=0$. Moreover, assume $r\Phi(s+r)\leq B_0\Phi(s)^{1+\delta_0}$ for some constant $B_0>0$ and all $s>0$ and $r\in[0,r]$. Then there exists constant $S_{\infty}=S_{\infty}(\delta_0,B_0,s_0)>0$ such that $\Phi(s)=0$ for all $s\geq S_{\infty}$, where $s_0$ is defined during the proof.
\end{lemma}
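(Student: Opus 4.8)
The plan is to prove this standard De Giorgi-type iteration lemma by showing that the sequence of levels $s_j$ defined recursively by doubling the increments of $r$ produces a geometrically decaying sequence $\Phi(s_j)$ that must hit zero in finitely many steps when initialized below a threshold. First I would set up the iteration scheme: fix a sequence $r_j = r/2^{j+1}$ so that $\sum_j r_j = r/2 < r$, and define $s_0$ to be chosen later and $s_{j+1} = s_j + r_j$, so that $s_j \nearrow s_\infty := s_0 + r/2$. Writing $a_j = \Phi(s_j)$, the hypothesis $r\Phi(s+r) \le B_0 \Phi(s)^{1+\delta_0}$ applied with $s = s_j$, $r = r_j$ (noting $\Phi$ is decreasing so $\Phi(s_j + r_j) \le \Phi(s_j + r_j')$ type monotonicity arguments let us replace the fixed $r$ appropriately, or one simply assumes the inequality holds for all $r\in(0,r]$) gives
\begin{equation*}
    a_{j+1} = \Phi(s_{j+1}) = \Phi(s_j + r_j) \le \frac{B_0}{r_j}\, a_j^{1+\delta_0} = \frac{2^{j+1} B_0}{r}\, a_j^{1+\delta_0}.
\end{equation*}

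Next I would invoke the classical fast-geometric-convergence lemma: if a nonnegative sequence satisfies $a_{j+1} \le C b^j a_j^{1+\delta_0}$ with $b = 2$, $C = 2B_0/r$, then $a_j \to 0$ provided $a_0 \le C^{-1/\delta_0} b^{-1/\delta_0^2}$. The proof of that sub-lemma is the routine induction showing $a_j \le a_0 b^{-j/\delta_0}$ under the smallness assumption, which forces $a_j \to 0$. Then, since $a_j = \Phi(s_j)$ and $s_j \le s_\infty$ for all $j$, right-continuity (or just monotonicity: $\Phi(s_\infty) \le \Phi(s_j) = a_j \to 0$) gives $\Phi(s_\infty) = 0$, and since $\Phi$ is decreasing, $\Phi(s) = 0$ for all $s \ge s_\infty =: S_\infty$. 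The one thing that needs care is the choice of $s_0$: we need $a_0 = \Phi(s_0)$ to satisfy the smallness threshold $C^{-1/\delta_0} 2^{-1/\delta_0^2}$, and this is possible precisely because $\Phi(s) \to 0$ as $s \to \infty$, so we pick $s_0 = s_0(\delta_0, B_0, r)$ large enough that $\Phi(s_0)$ drops below the threshold; then $S_\infty = s_0 + r/2$ depends only on $\delta_0, B_0, s_0$ (and $r$, which in our application is a fixed universal quantity).

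The main obstacle, such as it is, is purely bookkeeping: one must be careful that the hypothesis as stated ($r\Phi(s+r)\le B_0\Phi(s)^{1+\delta_0}$ "for all $s>0$ and $r\in[0,r]$" — with the overloaded symbol $r$) is applied with the variable increment $r_j$ rather than the fixed endpoint, and that the decay rate $b^{-j/\delta_0}$ beats the growth factor $2^j$ in the recursion, which is automatic since $(1+\delta_0)$-powers of a small number shrink faster than any fixed geometric factor grows. No deep idea is required beyond the standard De Giorgi iteration; the role of this lemma in the paper is simply to convert the integral inequality \eqref{iteration} into the statement that $-\tilde\varphi - s \le 0$ past a universal level, i.e. the desired $L^\infty$ bound, once $E$ is shown to be universally bounded in the next section.
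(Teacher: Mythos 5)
Your proof is correct, but it organizes the De Giorgi iteration the opposite way from the paper. You fix a summable geometric sequence of increments $r_j = r/2^{j+1}$ in the level variable $s$, and then show that the hypothesis forces $a_j = \Phi(s_j)$ to decay super-geometrically via the classical ``fast geometric convergence'' sub-lemma ($a_{j+1}\le C b^j a_j^{1+\delta_0}$, with $a_j\to 0$ once $a_0$ is below the threshold $C^{-1/\delta_0}b^{-1/\delta_0^2}$). The paper instead prescribes the decay of $\Phi$ directly — it defines $s_{j+1}=\sup\{s>s_j : \Phi(s)>\tfrac12\Phi(s_j)\}$, so that $\Phi(s_j)\le 2^{-j}\Phi(s_0)$ by right-continuity — and then uses the hypothesis to show the resulting increments in $s$ are summable, $s_{j+1}-s_j\le 2B_0\Phi(s_j)^{\delta_0}\le 2^{-j\delta_0}$, giving $S_\infty\le s_0+\tfrac{1}{1-2^{-\delta_0}}$. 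Both are standard; the paper's version is self-contained and produces an explicit closed-form bound on $S_\infty$, whereas yours offloads the arithmetic to the standard sub-lemma (which you would need to either cite or prove by the induction you sketch). One small point worth being explicit about: in your scheme you need $r_j$ to lie in the admissible range of the hypothesis for every $j$, which holds since $r_j\le r/2$; you flag this correctly but it's the one place where the overloaded symbol $r$ in the lemma statement could trip a reader.
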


\begin{proof}
    Fix an $s_0>0$ such that $\Phi(s_0)^{\delta_0}<\frac{1}{2B_0}$. This $s_0$ exists since $\Phi(s)\rightarrow 0$ as $s\rightarrow \infty$. Define $\{ s_j \}$ by $s_{j+1}=\sup\{ s>s_j|\phi(s)>\frac{1}{2}\phi(s_j) \}$. Thus $$s_{j+1}-s_j\leq B_0\Phi(s_j)^{1+\delta}/\Phi(s_{j+1})\leq2B_0\Phi(s_j)^{\delta}\leq2B_02^{-j\delta_0}\Phi^{\delta_0}\leq2^{-j\delta_0}.$$

    Let $S_{\infty}=s_0+\sum_{j\geq0}(s_{j+1}-s_{j})\leq s_0+\frac{1}{1-2^{-\delta_0}}$ which completes the proof. 
\end{proof}

In our application, $\Phi$ are chosen to be $\phi_\delta$. Let us derive two sides of (\ref{iteration}) respectively. The left hand side of (\ref{iteration}) can be derived by definition, and the proof can be found in \cite{GPT}. To get the right hand side of (\ref{iteration}), we need to apply the following inequality coming from Young's inequality,

\begin{equation}\label{Young}
    \int_{\Omega_{s}}v^pe^{nF}\leq \|e^{nF} \|_{L^1(\log L)^p(M\times[t_0,t_0+1])}+C_p\int_{\Omega_s} e^{2v}.
\end{equation}

If we choose $v=\frac{\lambda}{2}\left(\frac{-\tilde{\varphi}-s}{A_s^{1/(n+2)}} \right)^{\frac{n+2}{n+1}}$, then by the above inequality (\ref{Young}) we will have the following inequality

\begin{equation}
    \int_{\Omega_s}(-\tilde{\varphi}-s)^{\frac{(n+2)p}{n+1}}e^{nF}\omega_{0}{}^n\mathrm{d}t\leq  C(E) A_s^{\frac{p}{n+1}}.
\end{equation}


Thus the right hand side can be derived by the following estimate,

\begin{align*}
    A_s=&\int_{\Omega_s}(-\tilde{\varphi}-s)e^{nF}\omega_{0}{}^n\mathrm{d}t \leq\left(\int_{\Omega_s} (-\tilde{\varphi}-s)^{\frac{(n+2)p}{n+1} }e^{nF}\right)^{\frac{n+1}{(n+2)p}}\cdot\left( \int_{\Omega_s}e^{nF}\right)^{\frac{1}{q}}\\
    \leq& \Tilde{C}(E)A_s^{\frac{1}{n+2}}\phi^{1-\frac{n+1}{p(n+2)}},
\end{align*}
where the first line is by $\mathrm{H\ddot{o}lder}$'s inequality, and $p$ is the $\mathrm{H\ddot{o}lder}$ coefficient $\frac{n+1}{p(n+2)}+\frac{1}{q}=1$. In (\ref{iteration}), we can choose $\delta_0=1+\frac{p-n-1}{p(n+1)}$ and $B_0=\Tilde{C}(E)^{(n+2)/(n+1)}$.


To complete this section, we need to get an explicit expression on $s_0$. By Chebyshev's inequality $$\phi(s)\leq\frac{1}{s}\int_{\Omega_s}(-\tilde{\varphi})e^{nF}\omega_{0}^n\leq \frac{E}{s},$$ we can let $s_0=(2B_0)^{1/\delta_0}E$ be the one we need.

In other word, we get the theorem from the above arguments,

\begin{thm}\label{entbyeng}
    Let $\tilde{\varphi}$ be as in \ref{MainThm}, then we have 
$$\sup_{M\times[0,T)}|\tilde{\varphi}|\leq C(E).$$ Moreover, if $E$ can be controlled by a universal constant, then we have theorem \ref{MainThm}.
\end{thm}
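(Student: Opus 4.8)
The plan is to combine the integral estimate \eqref{theintineq} with the De Giorgi iteration lemma, and then separately close the loop by bounding $E$. First I would establish the two-sided inequality \eqref{iteration}. The left-hand bound $r\phi_{t_0}(s+r)\leq A_{s,t_0}$ is immediate from the definition of $A_{s,t_0}=\int_{\Omega_{s,t_0}}(-\tilde\varphi-s)e^{nF}\omega_0{}^n\mathrm{d}t$: on the sub-level set $\Omega_{s+r,t_0}$ one has $-\tilde\varphi-s\geq r$, so $A_{s,t_0}\geq r\int_{\Omega_{s+r,t_0}}e^{nF}\omega_0{}^n\mathrm{d}t=r\phi_{t_0}(s+r)$. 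For the right-hand bound, I would feed $v=\tfrac{\lambda}{2}\big((-\tilde\varphi-s)/A_{s,t_0}^{1/(n+2)}\big)^{(n+2)/(n+1)}$ into the Young-type inequality \eqref{Young}; the first term there is the fixed ``entropy'' quantity $\|e^{nF}\|_{L^1(\log L)^p}$, which is controlled by $\mathrm{Ent}_p(F)$ (a standard comparison between the Orlicz $L^1(\log L)^p$ norm and the $p$-entropy), and the second term $C_p\int_{\Omega_{s,t_0}}e^{2v}$ is exactly the integral bounded by $C(E)$ in \eqref{theintineq}. This yields $\int_{\Omega_{s,t_0}}(-\tilde\varphi-s)^{(n+2)p/(n+1)}e^{nF}\omega_0{}^n\mathrm{d}t\leq C(E)\,A_{s,t_0}^{p/(n+1)}$, and then a single application of H\"older's inequality with exponents $\tfrac{(n+2)p}{n+1}$ and its conjugate $q$ bounds $A_{s,t_0}\leq \tilde C(E)\,A_{s,t_0}^{1/(n+2)}\phi_{t_0}(s)^{1-\frac{n+1}{p(n+2)}}$; solving for $A_{s,t_0}$ gives $A_{s,t_0}\leq B_0\phi_{t_0}(s)^{1+\delta_0}$ with $\delta_0$ and $B_0=\tilde C(E)^{(n+2)/(n+1)}$ as recorded after \eqref{iteration}. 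The hypothesis $p>n+1$ is exactly what makes $\delta_0>0$ here, so the iteration will actually contract.

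Next I would apply the De Giorgi lemma to $\Phi=\phi_{t_0}$. We have $r\phi_{t_0}(s+r)\leq A_{s,t_0}\leq B_0\phi_{t_0}(s)^{1+\delta_0}$, and $\phi_{t_0}$ is decreasing and right-continuous with $\phi_{t_0}(s)\to 0$ as $s\to\infty$ (since $\int_{M\times[t_0,t_0+1]}e^{nF}\omega_0{}^n\mathrm{d}t<\infty$). Chebyshev's inequality $\phi_{t_0}(s)\leq \tfrac{1}{s}\int_{\Omega_{s,t_0}}(-\tilde\varphi)e^{nF}\omega_0{}^n\mathrm{d}t\leq E/s$ furnishes the admissible starting point $s_0=(2B_0)^{1/\delta_0}E$ (so that $\phi_{t_0}(s_0)^{\delta_0}<\tfrac{1}{2B_0}$). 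The lemma then gives $S_\infty=S_\infty(\delta_0,B_0,s_0)$ with $\phi_{t_0}(s)=0$ for $s\geq S_\infty$; that is, $-\tilde\varphi\leq S_\infty$ almost everywhere on $M\times[t_0,t_0+1]$, and since $S_\infty$ depends only on universal constants and $E$, and the bound is uniform in $t_0\in[0,T-1)$, we get $\inf_{M\times[0,T)}\tilde\varphi\geq -C(E)$. Combined with the upper bound $\tilde\varphi\leq C_3$ from Lemma \ref{gap}(2), this proves $\sup_{M\times[0,T)}|\tilde\varphi|\leq C(E)$. Finally, if $E$ is bounded by a universal constant, then $C(E)$ is universal, which is precisely the conclusion of Theorem \ref{MainThm}; this is the ``moreover'' clause.

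The main obstacle I anticipate is not in the iteration itself — that is by now a routine mechanism — but in making the passage from the singular auxiliary flow \eqref{aux} to honest inequalities fully rigorous, i.e. in justifying that all the estimates obtained for the mollified flows (with $\tau_k$ in place of $\chi_{\Omega_{s,t_0}}$) survive in the limit, and that the maximum-principle argument in Lemma \ref{keylemma} is legitimate at a $C^2$ solution; these are the points where one must be careful that the constants stay $T$-independent, which is the whole reason for the time-slab cutoff $M\times[t_0,t_0+1]$ and the slice-wise H\"ormander bound \eqref{alphainv}. A secondary point requiring care is the precise relationship between $\|e^{nF}\|_{L^1(\log L)^p}$ and $\mathrm{Ent}_p(F)$, and the tracking of how $\lambda$ was fixed by $\lambda c=\alpha$ so that \eqref{theintineq} holds with the $\alpha$ from Corollary \ref{alpha}; but these are bookkeeping issues rather than conceptual ones. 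The genuinely new input — bounding $E$ by a universal constant via an ABP-type estimate — is deferred to the next section, so for this theorem it is legitimately assumed as a hypothesis.
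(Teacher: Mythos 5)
Your proposal matches the paper's argument step for step: deriving the two-sided iteration inequality \eqref{iteration} from the definition of $A_{s,t_0}$, from Young's inequality \eqref{Young}, and from H\"older's inequality combined with \eqref{theintineq}; invoking the De~Giorgi lemma with the Chebyshev starting point $s_0=(2B_0)^{1/\delta_0}E$; and closing with the upper bound on $\tilde\varphi$ from Lemma~\ref{gap}. One useful byproduct of your write-up is the explicit remark that $p>n+1$ is what forces $\delta_0>0$: that observation corresponds to $\delta_0=\tfrac{p-n-1}{p(n+1)}$, whereas the paper's displayed $\delta_0=1+\tfrac{p-n-1}{p(n+1)}$ appears to be a typo (it would be positive for all $p$ and does not match the exponent $\tfrac{n+2}{n+1}-\tfrac{1}{p}$ obtained by solving the H\"older inequality for $A_s$).
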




\section{Energy bounds by ABP estimate}\label{seceng}

In this section we will use a parabolic version of ABP estimate by Krylov-Tso in \cite{PABP1,PABP2}, to give us a uniform energy bound. This approach was introduced in \cite{CC3} which is an analogue to the elliptic version in \cite{GPT}.

Let $u$ be a function defined on $D=\Omega\times[0,T]$, where $\Omega$ is a bounded domain in $\mathbb{R}^n$, then the parabolic ABP estimate says that

\begin{equation}\label{ParaABP}
    \sup_D u\leq\sup_{\partial_PD}u+C_n(\mathrm{diam}\Omega)^{\frac{n}{n+1}}\left(\int_{\Gamma} |\partial_t u\det D^2_x u|\mathrm{d}x\mathrm{d}t\right)^{\frac{1}{n+1}},
\end{equation}
where $\partial_T D$ is the parabolic boundary of $D$, and $\Gamma=\{(z,t)| \partial_t u\geq 0, \det D^2_x u\leq 0 \}$.

As mentioned in section \ref{sectaux}, we want to construct a family of local auxiliary equations. The auxiliary equations in this section are chosen to be

\begin{equation}\label{aux2}
    \left\{
    \begin{aligned}
        &(-\partial_t\psi_{t_0})\omega_{\psi_{t_0}}{}^n=\frac{(|F|^p+1)\chi_{M\times[t_0,t_0+1]}}{\int_{M\times[t_0,t_0+1]}(|F|^p+1)e^{nF}\omega_{0}{}^n\mathrm{d}t}\omega_{0}{}^n\\
        &\psi_{t_0}(\cdot,0)=0.
    \end{aligned}
    \right.
\end{equation}

We will use $\psi$ to denote $\psi_{t_0}$ without specifying. We also need to use $\tau_k$ to smoothen the singularities, but we will still keep estimating based on the family of auxiliary equations (\ref{aux2}) in the following to simplify our computation.

Parallel with lemma \ref{keylemma}, the following lemma plays a crucial role in this section.

\begin{lemma}\label{keylem2}
    Suppose $\varphi$ as in theorem \ref{MainThm}, and $\psi$ is a solution of (\ref{aux2}), then there exists a universal constant $C$, such that the following hold on $M\times[t_0,t_0+1]$,
    \begin{equation}\label{keyest2}
        -\epsilon(-\psi+\Lambda)^{\beta}-\tilde{\varphi}\leq C,
    \end{equation}
    where $\epsilon$ and $\beta$ will be defined during the proof.
\end{lemma}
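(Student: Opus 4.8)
\textbf{Proof proposal for Lemma \ref{keylem2}.}

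The plan is to mimic the maximum-principle argument of Lemma \ref{keylemma} almost verbatim, substituting the new auxiliary flow (\ref{aux2}) for (\ref{aux}) and keeping in mind that there is no longer a cutoff region $\Omega_{s,t_0}$ adapted to a level set of $\tilde\varphi$, so the ``escape to the boundary'' alternative must be handled differently. First I would set $H=-\epsilon(-\psi+\Lambda)^\beta-\tilde\varphi$ on $M\times[t_0,t_0+1]$ with $0<\beta<1$ to be fixed, and with $\Lambda$ large enough (depending on $\|\psi\|_{\Linfty}$, which is universal by Lemma \ref{cclemma} applied to (\ref{aux2})) that $1-\beta\epsilon\Lambda^{\beta-1}\geq 0$. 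Because the time interval is compact and $\psi(\cdot,0)=0$ while $\tilde\varphi$ is bounded above by $C_3$ (Lemma \ref{gap}), at the initial slice $t=t_0$ we already have $H\leq C$ universally; similarly one does not have a spatial boundary since $M$ is closed. So the maximum of $H$ over $M\times[t_0,t_0+1]$ is either controlled by its value on the slice $t=t_0$ (giving the conclusion immediately) or attained at an interior point $x_0=(z_0,t_*)$ with $t_*>t_0$, where the parabolic maximum principle for $L=-\partial_t+\lap_{\omega_\varphi}$ applies (at $t_*=t_0+1$ one has $\partial_t H\geq 0$, which is the correct sign).

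At such an interior maximum I would run exactly the chain of inequalities after Lemma \ref{keylemma}: from $0\geq LH$ one extracts
\[
0\geq \beta\epsilon(-\psi+\Lambda)^{\beta-1}\bigl(-\dot\psi+\tr_{\omega_\varphi}\omega_\psi\bigr)-\bigl(-\dot\varphi+C_3+n\bigr),
\]
then applies the geometric–arithmetic inequality and the two flow equations (\ref{MainFlow}) and (\ref{aux2}) to obtain
\[
-\dot\psi+\tr_{\omega_\varphi}\omega_\psi\geq (n+1)\,g^{1/(n+1)}\exp\!\Bigl(-\tfrac{1}{n+1}\dot\varphi\Bigr),
\]
where now $g$ denotes the right-hand density of (\ref{aux2}), namely $g=(|F|^p+1)\chi_{M\times[t_0,t_0+1]}\big/\int_{M\times[t_0,t_0+1]}(|F|^p+1)e^{nF}\omega_0{}^n\mathrm{d}t$. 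Cancelling the positive factor $\exp(-\tfrac{1}{n+1}\dot\varphi)$ and using the universal lower bound of $h(x)=(x-n-C_3)\exp(\tfrac{x}{n+1})$ exactly as before yields
\[
(n+1)\beta\epsilon(-\psi+\Lambda)^{\beta-1}g^{1/(n+1)}\leq C_4
\]
at $x_0$. The key point is that the denominator in $g$, call it $\mathrm{Ent}'=\int_{M\times[t_0,t_0+1]}(|F|^p+1)e^{nF}\omega_0{}^n\mathrm{d}t$, is bounded above universally: $\int_{M\times[t_0,t_0+1]}e^{nF}\omega_0{}^n\mathrm{d}t\le \mathrm{Vol}(M,\omega_0)$ and $\int_{M\times[t_0,t_0+1]}|F|^pe^{nF}\omega_0{}^n\mathrm{d}t\le \mathrm{Ent}_p(F)$ since $F$ is a space function and the time length is $1$. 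Hence $g^{1/(n+1)}\geq (|F|^p+1)^{1/(n+1)}/(\mathrm{Ent}')^{1/(n+1)}\ge c(|F|^p+1)^{1/(n+1)}$ for a universal $c$, and the displayed inequality becomes
\[
(-\psi+\Lambda)^{1-\beta}\ \geq\ c'\,(|F|^p+1)^{1/(n+1)}\,(-\tilde\varphi-H(x_0)-\epsilon\Lambda^\beta)\ \text{-type bound},
\]
which, after choosing $\beta=1-\tfrac1{n+1}$ (so $1-\beta=\tfrac1{n+1}$) and absorbing constants into $\epsilon$, rearranges to $-\epsilon(-\psi+\Lambda)^\beta-\tilde\varphi\le C$ at $x_0$, i.e. $H(x_0)\le C$. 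Since $H\le H(x_0)$ everywhere, this is (\ref{keyest2}).

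The main obstacle, and the place deserving real care, is the boundary/initial-slice alternative: unlike Lemma \ref{keylemma}, where the level-set domain $\Omega_{s,t_0}$ gave a clean ``outside the domain $\Rightarrow H\le 0$'' dichotomy, here the relevant safety net is the slice $t=t_0$ where $\psi=0$ and $\tilde\varphi\le C_3$, so I must check that the constant $\Lambda$ (which governs $\epsilon$ through $1-\beta\epsilon\Lambda^{\beta-1}=0$ and enters the final $C$) can be taken universal — this needs the universal $\Linfty$ bound on $\psi$ from Lemma \ref{cclemma}, whose hypothesis $\int_{M\times[t_0,t_0+1]}g\,\omega_0{}^n\mathrm dt<\infty$ holds because $g$ is a normalized density with total integral comparable to $1$. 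A secondary subtlety is the usual regularization: $g$ is not smooth across $\partial(M\times[t_0,t_0+1])$ in the time variable, so strictly one replaces $\chi_{M\times[t_0,t_0+1]}$ by smooth $\tau_k$ and passes to the limit, exactly as discussed in Section \ref{sectaux}; the estimates above are stable under this limit. Finally one records the explicit constants $\beta=(n+1)/(n+2)$ (or $1-\tfrac1{n+1}$, whichever normalization the subsequent ABP step wants) and $\epsilon$ determined by $\epsilon^{\,?}=c^{\,?}/\mathrm{Ent}'$ in the same bookkeeping style as Lemma \ref{keylemma}, so that the output feeds directly into the parabolic ABP inequality (\ref{ParaABP}) in the remainder of the section.
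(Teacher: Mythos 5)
Your proposal contains a genuine gap at the crucial step, and the gap points to why the paper's proof is structured so differently from Lemma~\ref{keylemma}.

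In Lemma~\ref{keylemma}, the auxiliary density is $f_{s,t_0}=\chi_{\Omega_{s,t_0}}(-\tilde\varphi-s)/A_{s,t_0}$, which contains $\tilde\varphi$ itself. That is exactly why the pointwise inequality at the maximum point, $(n+1)\beta\epsilon(-\psi+\Lambda)^{\beta-1}f^{1/(n+1)}\leq C_4$, closes the loop: after substituting the definition of $f$, one gets an inequality between $-\tilde\varphi-s$ and a power of $(-\psi+\Lambda)$, i.e.\ a bound on $H(x_0)$. Here, however, the density in (\ref{aux2}) is $\tilde f=(|F|^p+1)\chi_{M\times[t_0,t_0+1]}/\mathrm{Ent}'$, which does \emph{not} involve $\tilde\varphi$ at all. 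The inequality you derive at the interior maximum is therefore a relation between $|F|$ and $\psi$ only; it says nothing about $\tilde\varphi$, and the line in your argument where you ``rearrange'' to obtain $-\epsilon(-\psi+\Lambda)^\beta-\tilde\varphi\le C$ at $x_0$ does not follow. No amount of bookkeeping will make a pointwise maximum principle close this estimate, because the information linking $\tilde\varphi$ to the auxiliary data is simply absent from the pointwise inequality.

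This is precisely why the paper replaces the naive maximum principle by the parabolic ABP (Krylov--Tso) estimate. The actual proof forms $\rho=-\epsilon(-\psi+\Lambda)^\beta-\tilde\varphi$, smooths the positive part via $h_s(\rho)$, multiplies $h_s(\rho)^b$ (with $b=1+\frac{1}{2n+2}$) by a carefully scaled spatial cut-off $\eta$ around the alleged maximum, and then applies (\ref{ParaABP}) on $B_r(x_0)\times[t_0,t_0+1]$. The ABP inequality converts the pointwise differential inequality for $LH$ into an \emph{integral} bound for $\sup H$, and it is only at that stage that one can bring in the extra ingredients that link the objects together: the universal $L^1$ bound $\int_M|\tilde\varphi|\,\omega_0^n\le C_3$ from Lemma~\ref{gap} and the exponential integral estimate on $\psi$ from Corollary~\ref{alpha}, together with the observation that $\beta\epsilon(-\psi+\Lambda)^{\beta-1}\tilde f^{1/(n+1)}\le 1$ forces $|F|\le \frac{\alpha}{2}(-\psi+\Lambda)^{(1-\beta)(n+1)/p}$. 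Those integral inputs are what make the argument close, and they cannot be accessed by a purely pointwise maximum principle. In addition, the paper's constants are different from yours: $\beta\epsilon\Lambda^{\beta-1}=1/4$ (so the $\tr_{\omega_\varphi}\omega_0$ terms stay positive after being hit by the cutoff derivatives), $\Lambda=C_5\,\mathrm{Ent}_p(F)^{1/[(n+1)(1-\beta)]}$, and $\beta$ is chosen so that $(1-\beta)(n+1)<p$ (roughly $(1-\beta)(n+1)=p$ when $p<n+1$ and strictly less than $1$ when $p\ge n+1$), rather than your proposed $\beta=(n+1)/(n+2)$. Your final remark, that the lemma's output ``feeds into'' the ABP step elsewhere in the section, has it backwards: the ABP estimate is the engine inside the proof of this very lemma.
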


Let $\rho$ be the test function defined by $\rho=-\epsilon(-\psi+\Lambda)^{\beta}-\tilde{\varphi}$, and the operator $L$ be the linearization as above. To prove the lemma \ref{keylem2}, we only need to restrict $\rho$ into its the positive part. To be more precise, let us consider $h_{s}(x)=x+\sqrt{x^2+s}$, and use $h_{s}(\rho)$ to approximate $\rho_{+}$. We define the constants $\epsilon$ and $\Lambda$ in the following way $$\beta\epsilon\Lambda^{\beta-1}=1/4,\ \Lambda=C_5\mathrm{Ent}_p(F)^{\frac{1}{(n+1)(1-\beta)}},$$ where $C_5$ is a universal constant to be derived during the proof.

Let us consider $h_s(\rho)^{b}$ where $b=1+\frac{1}{2n+2}$, and assume $h_s(\rho)^{b}$ attains its maximal value $M$ at some point $x_0\in M\times[0,T)$. Moreover, we can assume $M>1$, otherwise there is nothing to prove. Let us apply the parabolic ABP estimate for $H=h_s(\rho)^{b}\cdot\eta$, where $\eta$ is a cut-off function defined below.

Assume $r=\min\{1,\mathrm{inj}(M,\omega_{0})\}$ and the cut-off function $\eta$ is defined in the following way,
\begin{align}
    &\eta\equiv1\ \mathrm{on}\ B_{\omega_0}(x_0,r/2)\times[t_0,t_0+1], \label{eta1}\\
    &\eta\equiv1-\theta\ \mathrm{on}\ \{ M\backslash B_{\omega_0}(x_0,3r/4)\}\times[t_0,t_0+1],\label{eta2}\\
    &|\nabla \eta|_{\omega_0}^2\leq 10\theta^2/r^2,\label{eta3}\\
    &\ |\nabla^2 \eta|_{\omega_0}\leq 10\theta/r^2,\label{eta4}
\end{align}
where $\theta=\min\{\frac{r^2\beta\Lambda^{\beta-1}}{100M^{1/b}}, \frac{r^2}{100n}\}$. The advantage of choosing such cut-off function $\eta$ is that $\eta$ is a space function with vanishing time derivative.


We are ready to prove the lemma \ref{keylem2}. The following inequality can be derived directly by applying the operator $L$ on $H$,
\begin{equation}\label{mainineq}
    LH\geq bh'h^{b-1}(-\partial_t \rho)\eta+(\lap_{\omega_{\varphi}}h^b)\eta+2\mathrm{Re}\left<\nabla h^b,\bar{\nabla}\eta\right>_{\omega_{\varphi}}+h^{b}\lap_{\omega_{\varphi}}\eta.
\end{equation}

We will consider each of the terms respectively to get good controls. Let us control the last two terms firstly, 
\begin{equation}\label{in1}
    h^{b}\lap_{\omega_{\varphi}}\eta\geq-\frac{10\theta}{r^2}h^b\tr_{\omega_{\varphi}}\omega_{0},
\end{equation}

\begin{equation}\label{in2}
    2\mathrm{Re}\left<\nabla h^b,\bar{\nabla}\eta\right>\geq-\frac{b(b-1)}{2}|\nabla h|^2_{\omega_{\varphi}}h^{b-2}-\frac{2b}{b-1}h^b|\nabla\eta|^2_{\omega_{\varphi}}.
\end{equation}

Then, expand the second term and get,
\begin{equation}\label{in3}
    (\lap_{\omega_{\varphi}}h^b)\eta=b(b-1)|\nabla h|^2_{\omega_{\varphi}}h^{b-2}\eta+bh'h^{b-1}\left(\lap_{\omega_{\varphi}}\rho\right)\eta+b|\nabla \rho|^2_{\omega_{\varphi}}h''h^{b-1}\eta
\end{equation}

Combine (\ref{mainineq}), (\ref{in1}), (\ref{in2}) and (\ref{in3}), and notice that the first term in (\ref{in3}) can absorb the first term in (\ref{in2}) and the third term of (\ref{in3}) is positive, thus we have 
\begin{equation}\label{LH1}
    \begin{aligned}
    LH\geq& bh'h^{b-1}(-\partial_t \rho)\eta+bh'h^{b-1}\left(\lap_{\omega_{\varphi}}\rho\right)\eta-\frac{2b}{b-1}h^b|\nabla\eta|^2_{\omega_{\varphi}}-\frac{10\theta}{r^2}h^b\tr_{\omega_{\varphi}}\omega_{0}\\
    \geq& bh'h^{b-1}(L\rho)\eta-\frac{2b}{b-1}\frac{100\theta^2}{r^2}h^b\tr_{\omega_{\varphi}}\omega_{0}-\frac{10\theta}{r^2}h^b\tr_{\omega_{\varphi}}\omega_{0}.
    \end{aligned}
\end{equation}

As we mentioned, the derivatives of the cut-off function $\eta$ will produce $\tr_{\omega_{\varphi}}\omega_{0}$ terms in (\ref{LH1}) which will be absorbed in the later estimates. The $L\rho$ term is the main term of (\ref{LH1}) and it has the same structure as the main term of the test function appearing in lemma \ref{keylemma}. This fact motives the following argument. 

Let us compute $L\rho$ and drop the positive term $\beta(1-\beta)\epsilon(-\psi+\Lambda)^{\beta-2}|\nabla\psi|^2$, then we have 
\begin{equation*}
    \begin{aligned}
         L\rho\geq& -\beta\epsilon\dot{\psi}(-\psi+\Lambda)^{\beta-1}+\dot{\tilde{\varphi}}+\beta\epsilon(-\psi+\Lambda)^{\beta-1}\lap_{\omega_{\varphi}}\psi\\
         &+\beta(1-\beta)\epsilon(-\psi+\Lambda)^{\beta-2}|\nabla\psi|^2-\lap_{\omega_{\varphi}}\tilde{\varphi}\\
         \geq&-\beta\epsilon\dot{\psi}(-\psi+\Lambda)^{\beta-1}+\dot{\varphi}+\beta\epsilon(-\psi+\Lambda)^{\beta-1}\lap_{\omega_{\varphi}}\psi-\lap_{\omega_{\varphi}}\varphi - C_3.
    \end{aligned}
\end{equation*}

Since $\lap_{\omega_{\varphi}}\varphi+\tr_{\omega_{\varphi}}\omega_{0}=n$ and $\lap_{\omega_{\varphi}}\psi+\tr_{\omega_{\varphi}}\omega_{0}=\tr_{\omega_{\varphi}}\omega_{\psi}$, we have
\begin{equation}\label{Lrho1}
    L\rho\geq\beta\epsilon(-\psi+\Lambda)^{\beta-1}\left(-\dot{\psi}+\tr_{\omega_{\varphi}}\omega_{\psi} \right)+\dot{\varphi}+(1-\beta\epsilon\Lambda^{\beta-1})\tr_{\omega_{\varphi}}\omega_{0}-C_3-n.
\end{equation}

The $\tr_{\omega_{\varphi}}\omega_{0}$ term in (\ref{Lrho1}) will serve as a good term to absorb the last two terms in (\ref{LH1}). The estimate for the rest terms in (\ref{Lrho1}) follows the same idea in (\ref{geo}), (\ref{4keylemma}) and (\ref{middlesteps}).
\begin{equation}
    \begin{aligned}\label{untitled}
        &\beta\epsilon(-\psi+\Lambda)^{\beta-1}\left(-\dot{\psi}+\tr_{\omega_{\varphi}}\omega_{\psi} \right)+\dot{\varphi}-C_3-n\\
        \geq&(n+1)\beta\epsilon(-\psi+\Lambda)^{\beta-1}\Tilde{f}^{1/(n+1)}\exp\left(-\frac{1}{n+1}\dot{\varphi}\right)+\dot{\varphi}-C_3-n\\
        =&\left[(n+1)\beta\epsilon(-\psi+\Lambda)^{\beta-1}\Tilde{f}^{1/(n+1)}+(\dot{\varphi}-C_3-n)\exp\left(\frac{1}{n+1}\dot{\varphi}\right)\right]\exp\left(-\frac{1}{n+1}\dot{\varphi}\right)\\
        \geq&\left[(n+1)\beta\epsilon(-\psi+\Lambda)^{\beta-1}\Tilde{f}^{1/(n+1)}-C_4\right]\exp\left(-\frac{1}{n+1}\dot{\varphi}\right),
    \end{aligned}
\end{equation}
where $$\Tilde{f}=\frac{(|F|^p+1)\chi_{M\times[t_0,t_0+1]}}{\int_{M\times[t_0,t_0+1]}(|F|^p+1)e^{nF}\omega_{0}{}^n\mathrm{d}t}.$$ 

What we get right now is the following inequality, note that the cut-off function satisfies $\eta\geq9/10$ for any points in the space-time $M\times[0,T)$.
\begin{equation}\label{LH2}
    \begin{aligned}
    LH\geq&  \frac{9}{10}(n+1)bh'h^{b-1}\left[\beta\epsilon(-\psi+\Lambda)^{\beta-1}\Tilde{f}^{1/(n+1)}-\frac{C_4}{n+1}\right]\exp\left(-\frac{1}{n+1}\dot{\varphi}\right)\\
    &+bh^{b-1}\left[\frac{9}{10}bh'(1-\beta\epsilon\Lambda^{\beta-1})-\frac{200\theta^2}{(b-1)r^2}h-\frac{10\theta}{br^2}h\right]\tr_{\omega_{\varphi}}\omega_{0}.
    \end{aligned}
\end{equation}

The $\tr_{\omega_{\varphi}}\omega_{0}$ term is positive since the choice of $\epsilon, \beta$ and $\Lambda$ and the first term requires more subtle arguments after applying the parabolic ABP estimate. Since we only care about the positive component $\rho_{+}$, we should get the following estimate in the two domains $\Omega_+=\{\rho>0 \}$ and $\Omega_{-}=\{\rho\leq0\}$ separately.

On $\Omega_{-}$, we have $$0\leq h_s(\rho)=\rho+\sqrt{\rho^2+s}=\frac{s}{\sqrt{\rho^2+s}-\rho}\leq\sqrt{s},$$ and $$1\leq h_s'(\rho)=1+\frac{\rho}{\sqrt{\rho^2+s}}\geq0.$$

Combine the two bounds on $h$ and $h'$ and inequality (\ref{LH2}), we have $$LH\geq bh^{b-1}\left[-C-\frac{200\theta^2}{(b-1)r^2}h\tr_{\omega_{\varphi}}\omega_{0}-\frac{10\theta}{br^2}h\tr_{\omega_{\varphi}}\omega_{0}\right],$$ on $\Omega_{-}$.


On the other hand, $1\leq h'\leq2$ on $\Omega_+$. By the choice the constants, the $\tr_{\omega_{\varphi}}\omega_{0}$ term in (\ref{LH2}) is positive. Therefore, we have $$LH\geq Cbh^{b-1}\left[\beta\epsilon(-\psi+\Lambda)^{\beta-1}\Tilde{f}^{1/(n+1)}-1\right]\exp\left(-\frac{1}{n+1}\dot{\varphi}\right)$$

Combining the above two cases, we obtain
\begin{equation}\label{preABP}
   \begin{aligned}
        LH\geq& bh^{b-1}\left[-C-\frac{200\theta^2}{(b-1)r^2}h\tr_{\omega_{\varphi}}\omega_{0}-\frac{10\theta}{br^2}h\tr_{\omega_{\varphi}}\omega_{0}\right]\chi_{\Omega_{-}}\\
        &+Cbh^{b-1}\left(\beta\epsilon(-\psi+\Lambda)^{\beta-1}\Tilde{f}^{1/(n+1)}-1\right)\exp\left(-\frac{1}{n+1}\dot{\varphi}\right)\chi_{\Omega_+}.
   \end{aligned}
\end{equation}

We need also discuss how to control the operator $L$ on the domain $\Gamma$. The estimate can be derived by $$LH=-\frac{\partial}{\partial t}H+\lap_{\omega_{\varphi}}H\leq-(n+1)\left(\left(-\frac{\partial}{\partial t}H\right)\cdot \det\mathrm{D}^2 H \cdot\frac{\omega_{0}{}^n}{\omega_{\varphi}{}^{n}} \right)^{\frac{1}{n+1}} ,$$ which connects our operator with the parabolic ABP estimate.

We are well prepared to apply the parabolic ABP estimate for (\ref{preABP}) based on above arguments. Denote $D=B_r(x_0)\times[t_0,t_0+1]$, then the parabolic ABP estimate (\ref{ParaABP}) tells us that
\begin{equation}\label{PostABP}
    \begin{aligned}
         &\sup_{D}(H)-\sup_{\partial_P D}(H)\leq\\
         &C_n\left( \int_{E\cap\Omega_+} h^{(n+1)(b-1)}\left(\beta\epsilon(-\psi+\Lambda)^{\beta-1}\Tilde{f}^{1/(n+1)}-1\right)^{n+1}_{-}\exp\left(-\dot{\varphi}\right)\frac{\omega_{\varphi}{}^n}{\omega_{0}{}^{n}} \omega_{0}{}^n\mathrm{d}t \right.\\
         &+\left.\int_{E\cap\Omega_{-}} h^{(n+1)(b-1)} \left(C+\frac{200\theta^2}{(b-1)r^2}h\tr_{\omega_{\varphi}}\omega_{0}+\frac{10\theta}{br^2}h\tr_{\omega_{\varphi}}\omega_{0}\right)^{n+1}\frac{\omega_{\varphi}{}^n}{\omega_{0}{}^{n}} \omega_{0}{}^n \mathrm{d}t \right)^{1/2n+1}\\
         &\leq C_n\left( \int_{E\cap\Omega_+} h^{1/2}\left(\beta\epsilon(-\psi+\Lambda)^{\beta-1}\Tilde{f}^{1/(n+1)}-1\right)^{n+1}_{-} e^{nF}\omega_{0}{}^n\mathrm{d}t + C_6s^{(n+1)(b-1)/2}\right)^{1/2n+1} \\
    \end{aligned}
\end{equation}



By $\beta\epsilon(-\psi+\Lambda)^{\beta-1}\Tilde{f}^{1/(n+1)}-1\leq0$, we have \begin{equation}\label{Fcontrol}
    |F|\leq\Psi^{1/p}(\beta\epsilon)^{-p/(n+1)}(-\psi+\Lambda)^{(1-\beta)(n+1)/p}=\frac{\alpha}{2}(-\psi+\Lambda)^{(1-\beta)(n+1)/p},
\end{equation}
where $\alpha$ is the constant in the lemma \ref{alpha}, the universal constant $C_5$ could be chosen to make the last equality in (\ref{Fcontrol}) hold. Moreover, $h(\rho) \leq 2\rho+\sqrt{\delta}$. Then by combining with the inequality (\ref{PostABP}) and letting $s\rightarrow0$, we have 
\begin{equation}
    \begin{aligned}
        \frac{1}{10}M&\leq\sup_{D}(H)-\sup_{\partial_P D}(H)\\
        &\leq C\left( \int_{E\cap\Omega_+} (-\tilde{\varphi}+\sqrt{\delta})^{1/2}\exp\left(\frac{\alpha}{2}(-\psi+\Lambda)^{(1-\beta)(n+1)/p}\right) \omega_{0}{}^n\mathrm{d}t + C_6s^{1/4}\right)^{1/2n+1} \\
        &\leq C\left( \int_{E\cap\Omega_+}\left( -\tilde{\varphi} + \exp\left(\alpha(-\psi+\Lambda)^{(1-\beta)(n+1)/p}\right)\right) \omega_{0}{}^n\mathrm{d}t + C_6s^{1/4}\right)^{1/2n+1}
    \end{aligned}
\end{equation}

From lemma \ref{gap}, we have the $L^1$-estimate of $\int -\tilde{\varphi}\omega_0{}^n\leq C_3$ at any time slice. The exponential term could be controlled by lemma \ref{alpha} instead. The choice of $\beta$ is given by this way: if $p\leq n+1$, then $(1-\beta)(n+1)=p$; and if $p=n+1$, then $(1-\beta)(n+1)=p(1-N^{-1})<1$. This completes the proof of lemma \ref{keylem2}.

Similar as in \cite{GPT}, we could get the following energy estimate,
\begin{thm}\label{energymacase}
    Suppose $\varphi$ to be the $C^2$ solution defined in theorem \ref{MainThm}, we have the energy estimate $$\int_{M\times[t_0,t_0+1]}(-\tilde{\varphi})^Ne^{nF}\omega_{0}{}^n\mathrm{d}t\leq C,$$ for $N=\frac{n+1}{n+1-p}$ if $p\in[1,n+1)$, and for $N>0$ if $p>n+1$, where the constant $C$ is a universal constant.
\end{thm}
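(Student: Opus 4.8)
The plan is to combine the pointwise inequality from Lemma \ref{keylem2} with the integral estimate \eqref{alphainv} (Corollary \ref{alpha}), exactly in the spirit of how \eqref{theintineq} was extracted from Lemma \ref{keylemma}. From Lemma \ref{keylem2} we have, on each slab $M\times[t_0,t_0+1]$, the bound $-\tilde\varphi \leq \epsilon(-\psi+\Lambda)^\beta + C$, where $\psi=\psi_{t_0}$ solves the ABP auxiliary flow \eqref{aux2} and $\beta$ is chosen so that $(1-\beta)(n+1)=p$ when $p\le n+1$, while $(1-\beta)(n+1)<1$ can be taken arbitrarily small (in particular, arbitrarily small positive exponent) when $p>n+1$. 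Raising to the $N$-th power, $(-\tilde\varphi)^N \leq C_N\big((-\psi+\Lambda)^{N\beta} + 1\big)$, so it suffices to control $\int_{M\times[t_0,t_0+1]}(-\psi+\Lambda)^{N\beta}e^{nF}\omega_0{}^n\,\mathrm{d}t$ by a universal constant, and then take the supremum over $t_0\in[0,T-1)$ (a standard covering argument upgrades slab bounds with constants independent of $t_0$ to a bound on all of $M\times[0,T)$).

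The first main step is therefore: apply Corollary \ref{alpha} to the auxiliary flow \eqref{aux2} to get $\int_{M\times[t_0,t_0+1]} e^{-\alpha_0\psi}\omega_0{}^n\,\mathrm{d}t \leq C_2$ with $C_2$ universal — here one must check the hypothesis of Lemma \ref{cclemma}, namely that the total mass of the right-hand side of \eqref{aux2} over $M\times[t_0,t_0+1]$ is bounded; but that mass is exactly $1$ by construction (the right-hand side is normalized), so $C_1=1$ and the estimate is clean. The second step is to absorb the polynomial factor $(-\psi+\Lambda)^{N\beta}$ against the exponential $e^{\alpha_0(-\psi)}$: since $e^{nF}\leq 1 + |F|^p e^{nF} \le 1 + |F|^p$ pointwise only where $|F|$ is large, one instead writes $\int (-\psi+\Lambda)^{N\beta}e^{nF} \leq \int (-\psi+\Lambda)^{N\beta}(1+|F|^p) $ is \emph{not} quite what we want — rather, use \eqref{Fcontrol}, $|F|\leq \tfrac{\alpha}{2}(-\psi+\Lambda)^{(1-\beta)(n+1)/p}$ (valid where the ABP integrand is nonpositive, which is the only region that contributes), so that $|F|^p e^{nF}$ is dominated by $C(-\psi+\Lambda)^{(1-\beta)(n+1)}\exp\!\big(\tfrac{\alpha}{2}(-\psi+\Lambda)^{(1-\beta)(n+1)/p}\big)$, and the exponent $(1-\beta)(n+1)/p \le 1$ lets Young's inequality trade the polynomial-times-sub-exponential factor for $\exp(\alpha_0(-\psi))$ plus a constant, whence Corollary \ref{alpha} closes the bound.

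The main obstacle is the case $p>n+1$ with $N$ allowed to be \emph{arbitrarily large}: then $N\beta$ is large, so the polynomial weight $(-\psi+\Lambda)^{N\beta}$ does not, by itself, get eaten by a fixed exponential $e^{\alpha_0(-\psi)}$ with the fixed constant $\alpha_0$ from Corollary \ref{alpha}. The resolution is that $\alpha_0$ depends only on $\omega_0$ and the argument of Corollary \ref{alpha} in fact gives $\int e^{-\alpha\psi}\le C$ for \emph{every} $\alpha>0$ (with $C$ depending on $\alpha$), because $\psi$ is $\omega_0$-psh on each slice with uniformly bounded sup; so for any prescribed $N$ one simply picks $\alpha$ large enough (depending on $N$) before running Young's inequality, and the constant $C$ in the theorem is then allowed to depend on $N$. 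One should also double-check the bookkeeping in the $\Lambda$-shift: $\Lambda = C_5\,\mathrm{Ent}_p(F)^{1/((n+1)(1-\beta))}$ is universal, so $(-\psi+\Lambda)^{N\beta}\le C(\Lambda)\big((-\psi)_+^{N\beta}+1\big)$ introduces only universal constants. Assembling these pieces and taking $\sup_{t_0}$ gives the stated energy estimate, which by Theorem \ref{entbyeng} (with $E$ now universally bounded) completes the proof of Theorem \ref{MainThm}.
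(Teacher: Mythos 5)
Your overall scaffold --- dominate $(-\tilde\varphi)^N$ by $(-\psi+\Lambda)^{N\beta}$ via Lemma \ref{keylem2}, then close with Young's inequality and the $\alpha$-invariant bound of Corollary \ref{alpha} on each slab --- is the right template (the paper simply cites \cite{GPT} for this step, and the normalization observation that $C_1=1$ for the auxiliary flow \eqref{aux2} is correct). But the execution has genuine gaps. The most serious one is your resolution of the $p>n+1$ case: you claim that since $\psi$ is $\omega_0$-psh on each slice with uniformly bounded $\sup$, the bound $\int e^{-\alpha\psi}\omega_0{}^n\leq C$ holds for \emph{every} $\alpha>0$. This is false. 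The $\alpha$-invariant is a fixed, finite constant depending on $(M,\omega_0)$; even locally, a function like $\psi\sim\log|z|$ has bounded $\sup$ yet $\int e^{-\alpha\psi}$ diverges once $\alpha$ is large. So you cannot absorb an arbitrarily high power $(-\psi+\Lambda)^{N\beta}$ by raising $\alpha$. Moreover, your parameter choice points the wrong way: for $p>n+1$ you take $(1-\beta)(n+1)$ small, i.e.\ $\beta$ close to $1$, which pushes $N\beta$ up towards $N$ --- exactly the regime that cannot be handled by the fixed $\alpha_0$. The correct move is the opposite: for a prescribed $N$, take $\beta$ small enough that $N\beta\le p$ (e.g.\ $\beta=1/N$ for $N>1$). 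The constraint from Lemma \ref{keylem2}, namely $(1-\beta)(n+1)/p\le 1$, i.e.\ $\beta\ge 1-\tfrac{p}{n+1}$, is then slack precisely because $p>n+1$; for $p\in[1,n+1)$ it is this lower bound on $\beta$ that caps $N$ at $\tfrac{n+1}{n+1-p}$.

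The second gap is the use of \eqref{Fcontrol}. That inequality was derived from the sign condition $\beta\epsilon(-\psi+\Lambda)^{\beta-1}\tilde f^{1/(n+1)}\le 1$, which holds only on the set picked out by the parabolic ABP integral inside the proof of Lemma \ref{keylem2}. It is \emph{not} available on the whole slab, and the energy integral $\int_{M\times[t_0,t_0+1]}(-\psi+\Lambda)^{N\beta}e^{nF}\omega_0{}^n\,\mathrm{d}t$ sees the whole slab, not an ABP contact set; your parenthetical ``which is the only region that contributes'' has no justification here. Neither \eqref{Fcontrol} nor a variable $\alpha$ is needed. With $\beta$ chosen so that $N\beta\le p$ (and $\beta\ge 1-\tfrac{p}{n+1}$ so that Lemma \ref{keylem2} applies), split the slab into $\{nF\le\tfrac{\alpha_0}{2}(-\psi+\Lambda)\}$ and its complement. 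On the first set $(-\psi+\Lambda)^{N\beta}e^{nF}\le(-\psi+\Lambda)^{N\beta}e^{\frac{\alpha_0}{2}(-\psi+\Lambda)}\le C\,e^{\alpha_0\Lambda}e^{-\alpha_0\psi}$, and Corollary \ref{alpha} with the \emph{fixed} $\alpha_0$ closes this (recall $\Lambda$ is universal once $\beta$ is fixed). On the complement $-\psi+\Lambda<\tfrac{2n}{\alpha_0}F_+$, so $(-\psi+\Lambda)^{N\beta}e^{nF}\le C\,(1+|F|^p)e^{nF}$ using $N\beta\le p$, and this is controlled by $V+\mathrm{Ent}_p(F)$. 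Taking $\sup_{t_0}$ then gives Theorem \ref{energymacase}.
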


The theorem \ref{energymacase} together with theorem \ref{entbyeng} implies the main theorem \ref{MainThm}.


\section{Some generalizations}\label{secgen}

In this section, we will derive some generalizations \ref{genmainthm} and \ref{genmainthm2} of our main theorem \ref{MainThm}.

The idea of theorem \ref{genmainthm} comes from the result of Chen-Cheng \cite{CC3} for general parabolic Hessian equations. The linearization of the flow (\ref{genflow}) is $Lu=-\partial_t u+G^{i\bar{j}}u_{i\bar{j}}$, where $G^{i\bar{j}}=\frac{1}{\mathcal{F}}\frac{\partial \mathcal{F}(\lambda[h_{\varphi}])}{\partial h_{i\bar{j}}}$. To prove theorem \ref{genmainthm}, we will use the family of auxiliary equations (\ref{aux}) and follow the same argument as in section \ref{secent} and \ref{seceng}.

The proof of main estimate (\ref{keylemma}) is tedious and we will only show the essential difference comparing with previous sections. When we apply the operator $L$ to the test function $-\epsilon(-\psi+\Lambda)^{\beta}-\tilde{\varphi}-s$, the Laplacian operator will be replaced by trace with respect to $G^{i\bar{j}}$. To be more precise, we have the following estimates,
\begin{align*}
    0\geq& L\left(-\epsilon\left(-\psi+\Lambda\right)^{\beta}-\tilde{\varphi}-s\right)\geq -\beta\epsilon(-\psi+\Lambda)^{\beta-1}\dot{\psi}+\dot{\tilde{\varphi}}\\
    &+\beta\epsilon(-\psi+\Lambda)^{\beta-1}\tr_{G}(\i\dd\psi)+\beta(1-\beta)\epsilon(-\varphi+\Lambda)^{\beta-2}|\partial\varphi|^2_{G}-\tr_{G}(\i\dd\varphi)\\
    \geq&\beta\epsilon(-\psi+\Lambda)^{\beta-1}\left(-\dot{\psi}\right)-\left(-\dot{\varphi}\right)+\beta\epsilon(-\psi+\Lambda)^{\beta-1}\tr_{G}(\i\dd\psi)-\tr_{G}(\i\dd\varphi)-C_3\\
    \geq&\beta\epsilon(-\psi+\Lambda)^{\beta-1}\left(-\dot{\psi}\right)-\left(-\dot{\varphi}\right)+\beta\epsilon(-\psi+\Lambda)^{\beta-1}\tr_{G}\omega_{\psi}-\tr_{G}\omega_{\varphi}-C_3\\
    \geq&\beta\epsilon(-\psi+\Lambda)^{\beta-1}\left(-\dot{\psi}+\tr_{G}\omega_{\psi}\right)-\left(-\dot{\varphi}+r+C_3\right).
\end{align*}

We also need to deal with the factor $-\dot{\psi}+\tr_{G}\omega_{\psi}$ as in inequalities (\ref{geo}), (\ref{4keylemma}) and (\ref{middlesteps}). The lower bound of the determinant on the condition of $\mathcal{F}$ will give us a lower bound on $\det G^{i\bar{j}}\geq\gamma\mathcal{F}^{-\frac{n}{r}}$. By comparing the two flows (\ref{aux}) and (\ref{genflow}) and the homogeneous degree $r$ condition, we have,
\begin{equation}\label{untitled2}
    \begin{aligned}
        -\dot{\psi}+\tr_{G}\omega_{\psi}&\geq(n+1)\sqrt[n+1]{\frac{fe^{nF}\omega_{0}{}^n}{\omega_{\psi}{}^n}\cdot\omega_{\psi}{}^n\det G^{i\bar{j}}}\\
        &\geq C_7 f^{1/(n+1)}\sqrt[n+1]{\left( \frac{e^{rF}}{\mathcal{F}}\right)^{n/r}}\\
        &\geq C_7 f^{1/(n+1)}\exp\left(-\frac{n}{r(n+1)}\dot{\varphi}  \right),\\
    \end{aligned}
\end{equation}
where $C_7$ is a universal constant.

Since the function $h(x)=(x-r-C_3)\exp\left(\frac{n}{r(n+1)}x\right)$ has a lower bound $c$ where $c=-\frac{r(n+1)}{n}\exp\left(\frac{nC_3-r}{r(n+1)}\right)$, we have the same estimate $$0\geq C_7\beta\epsilon(-\psi+\Lambda)^{\beta-1}f^{1/(n+1)}-\frac{r(n+1)}{n}\exp\left(\frac{nC_3-r}{r(n+1)}\right). $$ 

Then we can follow the same argument and the only thing we should be careful is the estimate (\ref{untitled}) which should be done by the trick in (\ref{untitled2}). The above argument gives the proof of theorem \ref{genmainthm}.

Instead of replacing the Hessian operator, we can also consider a much more general flow equation (\ref{star}). To get lemma \ref{keylemma} and \ref{keylem2}, we need lemma \ref{gap} to get the upper bound of the integral. Then we can apply the same procedure of estimate.

When $\Theta(y)=-1/y$, we have $$\int_M\dot{\varphi}=\int_M-e^{nF}\omega_0{}^n/\omega_{\varphi}{}^n<0.$$

When $\Theta(y)=y^a$ for $a>0$, we have $$\ddot{\varphi}=\frac{\mathrm{d}}{\mathrm{d}t}\left(\frac{\omega_{\varphi}{}^n}{e^{nF}\omega_0{}^n}\right)^a=a\lap_{\varphi}\dot{\varphi}\cdot\frac{\omega_{\varphi}{}^n}{e^{nF}\omega_0{}^n}\cdot\left(\frac{\omega_{\varphi}{}^n}{e^{nF}\omega_0{}^n}\right)^{a-1}=a\dot{\varphi}\lap_{\varphi}\dot{\varphi}.$$

Consider the first variation of the functional $\int_M\dot{\varphi}\omega_{\varphi}{}^n$, we have
\begin{equation*}
    \begin{aligned}
        &\frac{\mathrm{d}}{\mathrm{d}t}\int_M\dot{\varphi}\omega_{\varphi}{}^n=\int_M\ddot{\varphi}\omega_{\varphi}{}^n + \int_M\dot{\varphi}\frac{\mathrm{d}}{\mathrm{d}t}\left(\omega_{\varphi}{}^n\right)\\
        =&\int_M\ddot{\varphi}\omega_{\varphi}{}^n+\int_M\dot{\varphi}\lap_{\varphi}\dot{\varphi}\omega_{\varphi}{}^n\\
        =&(a+1)\int_M\dot{\varphi}\lap_{\varphi}\dot{\varphi}\omega_{\varphi}{}^n=-(a+1)\int_M|\nabla\dot{\varphi}|_{\omega_{\varphi}}^2\omega_{\varphi}{}^n\leq 0.
    \end{aligned}
\end{equation*}

Then the estimate of $\int_M\dot{\varphi}\omega_0{}^n$ follows from
\begin{equation*}
    \begin{aligned}
        &\int_M\dot{\varphi}\omega_0{}^n\leq\int_M\dot{\varphi}\omega_0{}^n-\int_M\dot{\varphi}\omega_{\varphi}{}^n+\int_M\dot{\varphi}(\cdot,0)\omega_{\varphi_0}{}^n\\
        \leq&\int_M\dot{\varphi}(\omega_0{}^n-\omega_{\varphi}{}^n)+\int_Me^{-anF}\left(\frac{\omega_{\varphi_0}{}^n}{\omega_0{}^n}\right)^a\omega_{\varphi_0}{}^n\\
        \leq&\int_M\dot{\varphi}(1-e^{nF}\dot{\varphi}^{1/a})\omega_0{}^n+C\int_Me^{-anF}\omega_0{}^n
    \end{aligned}
\end{equation*}
where $C$ is universal.

Let us consider a function $A(y)=y-ly^{1+\frac{1}{a}}$ defined on $y\in[0,\infty)$, where $l$ is some positive number. Then by calculus, we have $$A(y)\leq A\left(\frac{a^a}{l(a+1)^a}\right)\leq\frac{a^a}{(a+1)^{a+1}}l^{-a}.$$

Thus we have the following estimate, $$\int_M\dot{\varphi}\omega_0{}^n\leq \frac{a^a}{(a+1)^{a+1}}\int_Me^{-anF}\omega_0{}^n +C\int_Me^{-anF}\omega_0{}^n\leq CK,$$
which completes the proof of theorem \ref{genmainthm2}.

\bibliographystyle{plain}
\bibliography{ref.bib}

\end{document}